\documentclass[12pt,reqno]{article}

\usepackage[usenames]{color}
\usepackage{amssymb}
\usepackage{graphicx}
\usepackage{amscd}
\usepackage{amsmath}
\usepackage[colorlinks=true,
linkcolor=webgreen,
filecolor=webbrown,
citecolor=webgreen]{hyperref}

\definecolor{webgreen}{rgb}{0,.5,0}
\definecolor{webbrown}{rgb}{.6,0,0}

\usepackage{color}
\usepackage{fullpage}
\usepackage{float}
\usepackage[mathcal]{euscript}
\usepackage{graphics,amssymb}
\usepackage{amsthm}
\usepackage{amsfonts}
\usepackage{latexsym}
\usepackage{epsf}
\usepackage{subfigure}
\usepackage{booktabs}
\usepackage{array}
\usepackage{multicol}

\newcommand{\seqnum}[1]{\href{http://oeis.org/#1}{\underline{#1}}}

\begin{document}

\theoremstyle{plain}
\newtheorem{theorem}{Theorem}
\newtheorem{corollary}[theorem]{Corollary}
\newtheorem{lemma}[theorem]{Lemma}
\newtheorem{proposition}[theorem]{Proposition}

\theoremstyle{definition}
\newtheorem{definition}[theorem]{Definition}
\newtheorem{notation}[theorem]{Notation}
\newtheorem{example}[theorem]{Example}
\newtheorem{conjecture}[theorem]{Conjecture}

\theoremstyle{remark}
\newtheorem{remark}[theorem]{Remark}

\begin{center}
\vskip 1cm{\LARGE\bf A state enumeration of the foil knot\\
\vskip .11in
}

\vskip 1cm
\large
Franck Ramaharo and Fanja Rakotondrajao\\
Universit\'e d'Antananarivo\\
D\'epartement de Math\'ematiques et Informatique\\
101 Antananarivo,	Madagascar\\
\href{mailto:franck.ramaharo@gmail.com}{\tt franck.ramaharo@gmail.com} \\
\href{mailto:frakoton@yahoo.fr}{\tt frakoton@yahoo.fr}
\end{center}
\begin{center}
\today
\end{center}
\begin{abstract}
We split the crossings of the foil knot and enumerate the resulting states with a generating polynomial. Unexpectedly, the number of such states which consist of two components are given by the lazy caterer's sequence. This sequence describes the maximum number of planar regions that is obtained with a given number of straight lines. We then establish a bijection between this partition of the plane and the concerned foil splits sequence.
\end{abstract}

\section{Introduction}

Knot theory is a branch of algebraic topology which deals with the properties of figures that remain unaltered under continuous deformations. A mathematical knot is defined as a closed, non-self-intersecting curve that is embedded in three dimensions. Enumerating knots is an active field of research in knot theory. We attribute the most notable one to Tait \cite{Tait}, who attempted to enumerate knots based on the minimum number of crossings \cite{stoimenow}. Other enumeration tools include Gauss codes \cite{Gausscode}, matrix models \cite{Matrixmodel}, tricolorability \cite{tricolorability} and knot polynomials \cite{Homfly,Kauffman}. In this paper, we compute the ``states'' \cite[p.\ 25]{Kauffman1} of a particular subset of the knot family with an easier combinatorial approach to the Kaufman state-sum model. The computation is based on a particular representation of the knot and we define a generating polynomial whose coefficients enumerate a property of the Kauffman states.

Based on the previous definition of the mathematical knots, the trivial one, called the \textit{unknot}, is simply a closed curve that can be continuously deformed into a circle without self-cuttings. Besides, one of the simplest family of knots is the \emph{foil} family which is a subset of the \emph{torus knot} family. A torus knot \cite[p.\ 107]{Adams} is a special kind of knot that can be drawn on the surface of a torus without self-intersections. A foil knot is obtained by winding $ n $ times around a circle in the interior of the torus, and $ 2 $ times around its axis of rotational symmetry. Some well-known representatives of the foil family are pictured in \hyperref[fig:foil3d]{Figure \ref*{fig:foil3d}}.

\begin{figure}
\centering
\includegraphics[width=0.15\linewidth]{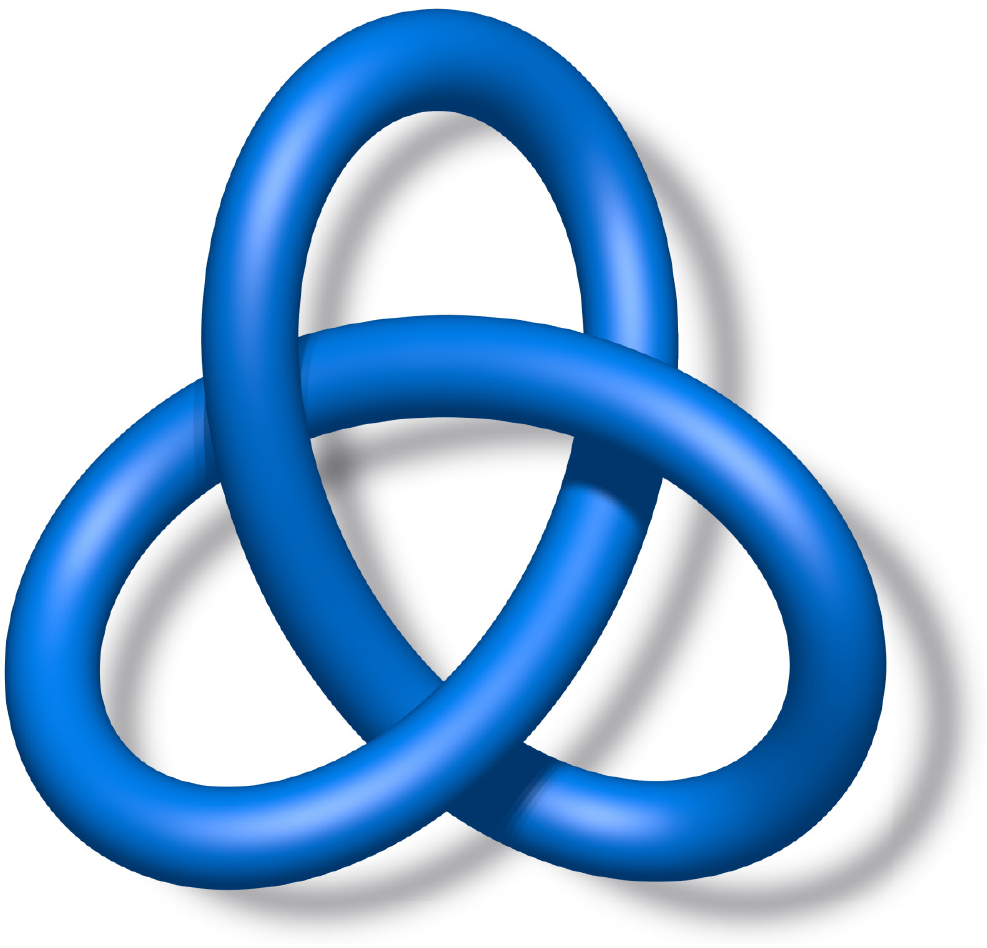}
\includegraphics[width=0.15\linewidth]{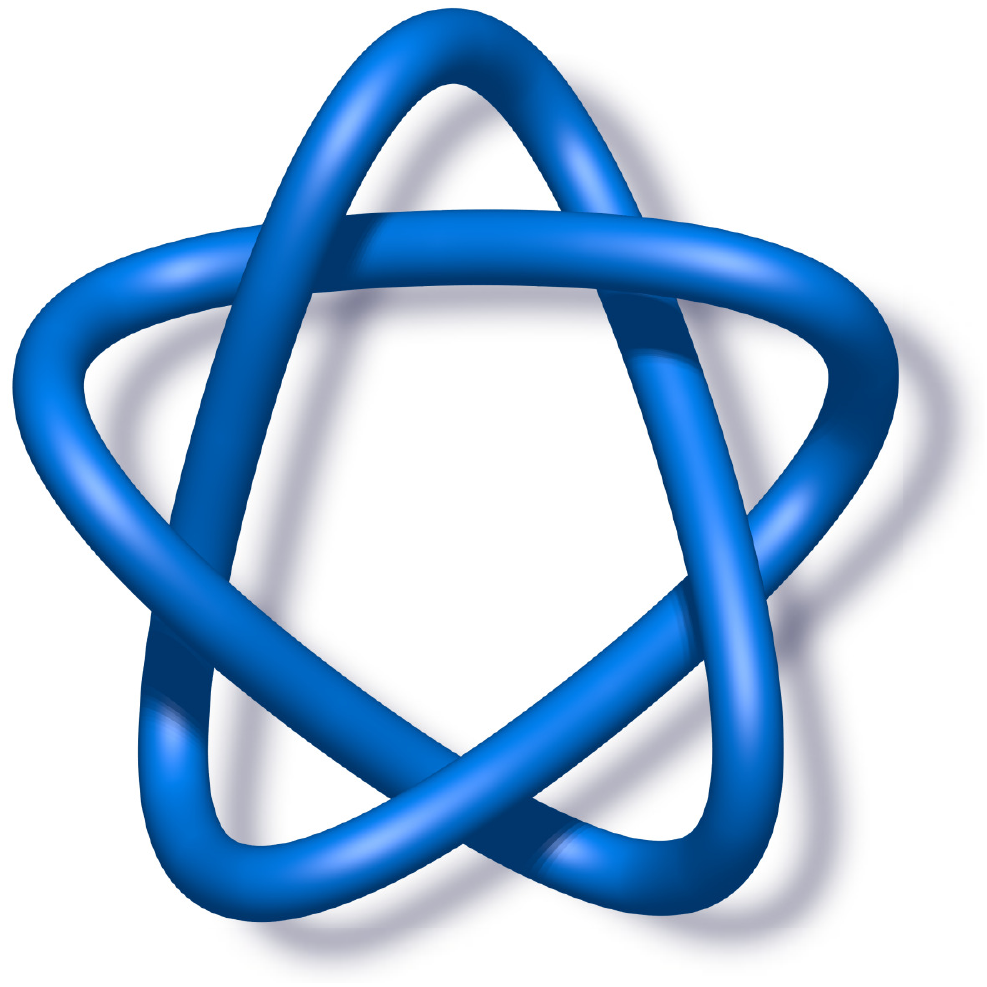}
\includegraphics[width=0.15\linewidth]{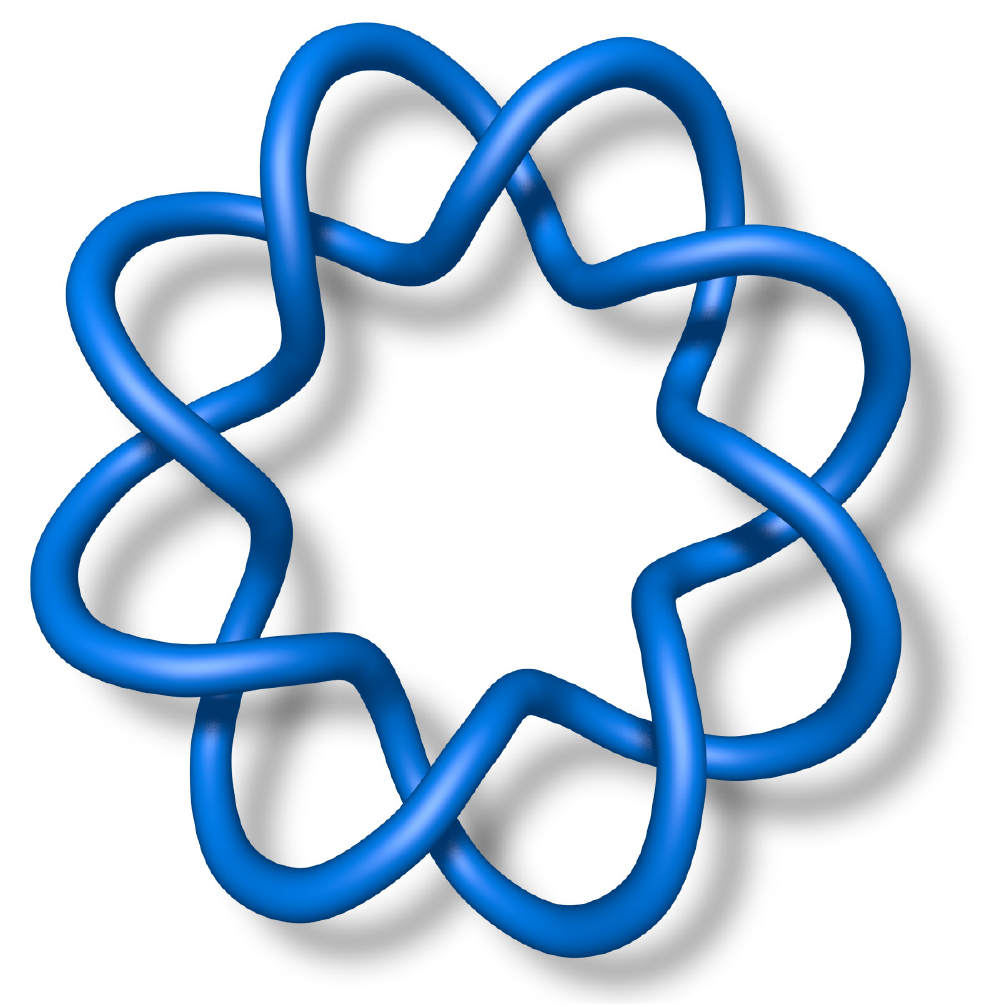}
\caption{Fom the left to the right: the trefoil, the pentafoil and the octafoil knots.}
\label{fig:foil3d}
\end{figure}

\begin{definition}
	Let $ n $ be a nonnegative integer. An $ n $-foil is a representative of the foil family whose shadow diagram consists of $ n $ crossings. We let $ F_n $ denote the $ n $-foil.
\end{definition}

It is generally more convenient to deal with knots with the so-called \textit{knot diagram}. However, for the sake of simplicity, we will rather work with the \emph{shadow diagram} \cite{Denton} which is the regular projection of the three-dimensional knot onto the plane. For example, we see in \hyperref[fig:foil-shadow]{Figure \ref*{fig:foil-shadow}} the shadows of the trefoil, the pentafoil and the octafoil knots.

\begin{figure}
\centering
\includegraphics[width=0.15\linewidth]{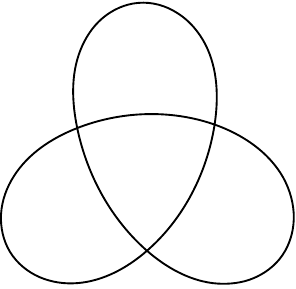}
\includegraphics[width=0.15\linewidth]{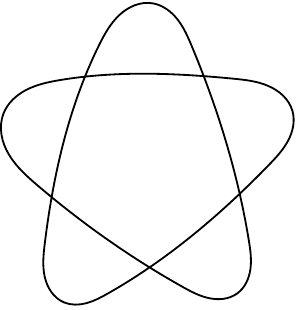}
\includegraphics[width=0.15\linewidth]{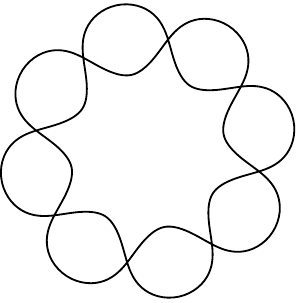}
\caption{Shadow diagrams of the trefoil, the pentafoil and the octafoil knots.}
\label{fig:foil-shadow}
\end{figure}

We abusively make no distinction between a knot, its diagram or its shadow throughout this paper. We point out that according to the Jordan curve theorem \cite[p.\ 7]{Jordan}, the shadow diagram can be checkerboard colored, i.e., we may fill a diagram with two colors such that any two faces that share a boundary arc have opposite colors (see \hyperref[fig:jordan]{Figure \ref*{fig:jordan}}).

\begin{figure}
\centering
\includegraphics[width=0.4\linewidth]{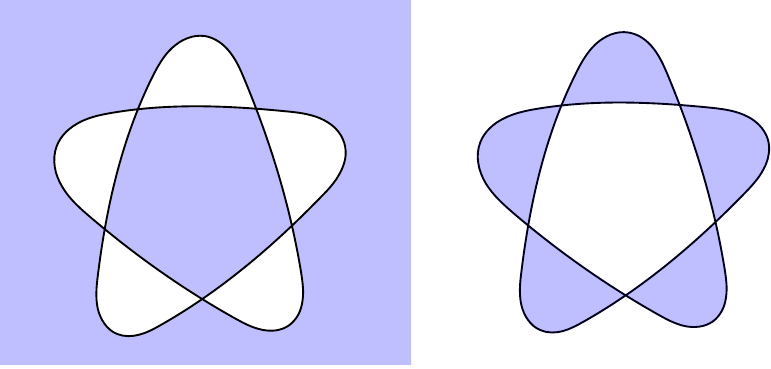}
\caption{The checkerboard colorings of the pentafoil.}
\label{fig:jordan}
\end{figure}

We say that two areas are \emph{neighbors} if their common borderline is an arc, and \emph{opposite} if their common boundary is a double point. We say that the area which is opposed to the unbounded one, as well as each of the related opposite areas, is the \textit{$ A $-region}. A neighbor of the $ A $-region is then referred to as the \textit{$ B $-region}. From this point of view, we shall apply a split operation at a given crossing.

\begin{definition}
We may \emph{split} (or \emph{smooth}) a crossing in two ways such that either of the $ A $-regions or the $ B $-regions are respectively joined into the so-called \textit{$ A $-channel} and the\textit{ $ B $-channel}. We say that a split is of \textit{type $ A $} or an \textit{$ A $-split} (correspondingly\ of \textit{type $ B $} or a \textit{$ B $-split}) if it opens the $ A $-channel (correspondingly the $ B $-channel). The split process is illustrated in \hyperref[fig:state-of-crossing]{Figure \ref*{fig:state-of-crossing}}.

\begin{figure}
\centering		
\includegraphics[width=0.3\linewidth]{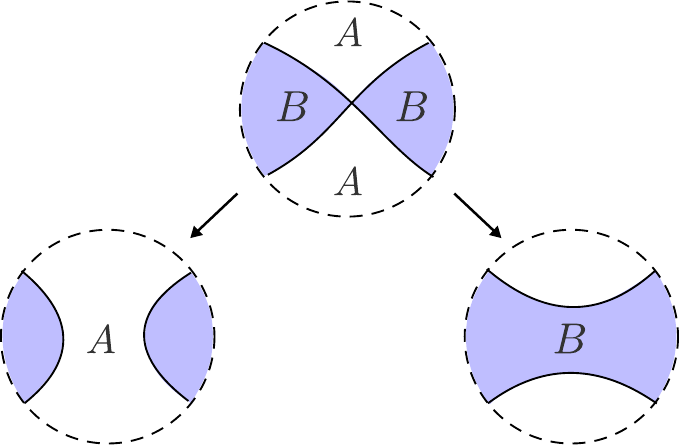}
\caption{The splits of a crossing.}
\label{fig:state-of-crossing}		
\end{figure}

A \textit{state} for the diagram $ D $ is a diagram obtained by opening the $ A $ channel or the $ B $ channel at each crossing. The result is a union of mutually disjoint Jordan curves (or components). We let a \textit{$ k $-state} denote a state which consists of the union of $ k $ Jordan curves. Notice that the number of states of a  diagram with $ n $ crossings is then $ 2^n $. For example, \hyperref[fig:trefoil-decomposition]{Figure \ref*{fig:trefoil-decomposition}} displays the $ 2^3 $ states of the trefoil $ F_3 $ arranged in a binary tree. We then associate the enumeration of these states with the so-called \textit{states summation} \cite[p.\ 23] {StatesSummation}.

\begin{figure}
\centering
\includegraphics[width=0.8\linewidth]{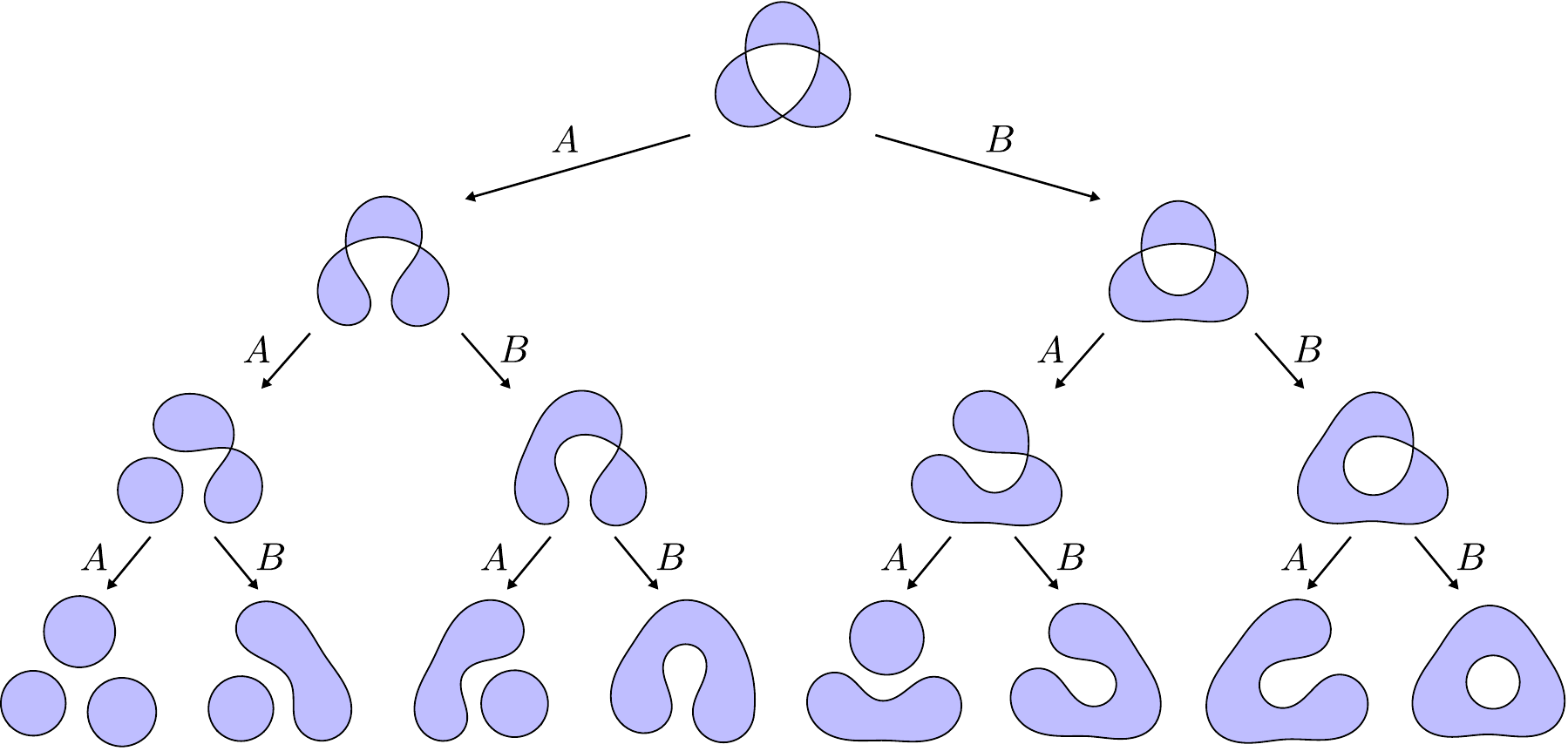}
\caption{The states for the trefoil diagram.}
\label{fig:trefoil-decomposition}
\end{figure}
\end{definition}

\begin{definition}
Let $ S $ be a state of a diagram $ D $, and let $ |S| $ denote the number of components of the state $ S $. The generating polynomial associated with the diagram $ D $ is the univariate polynomial defined by
\begin{equation*}
D(x)=\sum\limits_{S}^{}x^{|S|}
\end{equation*}
where the summation is taken over all states for $ D $.
\end{definition}

Accordingly, the trefoil $ F_3 $ of \hyperref[fig:trefoil-decomposition]{Figure \ref*{fig:trefoil-decomposition}} has the following generating polynomial:
\begin{equation}\label{eq:F3}
F_3(x)=x^3+4x^2+3x.
\end{equation}

\begin{remark}
We may continuously deform a planar diagram such that we obtain another representation which is \textit{planar isotopic} \cite[p.\ 12]{Adams} to the former diagram, and the crossings remaining unaltered (see \hyperref[fig:12foil]{Figure \ref*{fig:12foil}}). We can then write the generating polynomial of the $ 2 $-foil and the $ 1 $-foil with the help of \hyperref[fig:trefoil-decomposition]{Figure \ref*{fig:trefoil-decomposition}}. They are respectively
\begin{equation}\label{eq:F1}
F_1(x)=x^2+x
\end{equation}
and
\begin{equation}\label{eq:F2}
F_2(x)=2x^2+2x.
\end{equation}

\begin{figure}
\centering
\hspace*{\fill}
\subfigure[$ 1 $-foil\label{sub:1foil}]{\includegraphics[width=0.3\linewidth]{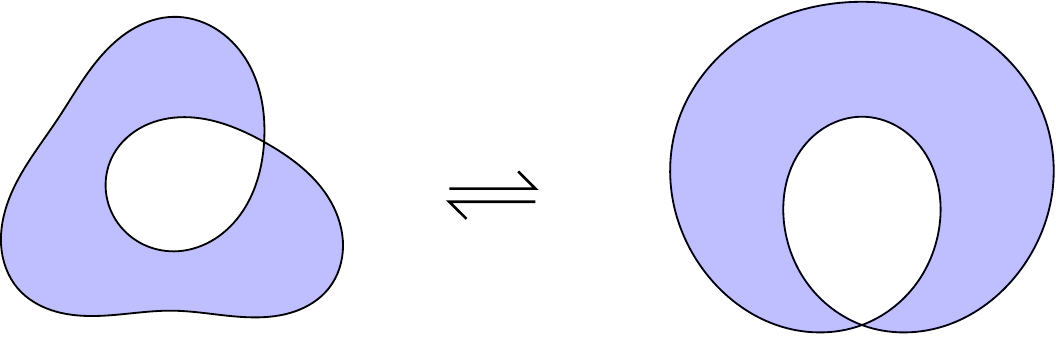}}\hfill%	
\subfigure[$ 2 $-foil\label{sub:2foil}]{\includegraphics[width=0.3\linewidth]{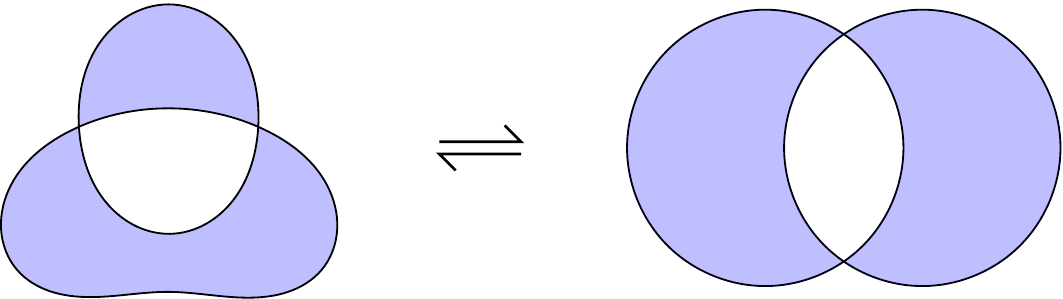}}
\hspace*{\fill}
\caption{Two equivalent representations of the $ 1 $-foil and the $ 2 $-foil.}
\label{fig:12foil}
\end{figure}
\end{remark}

Let the polynomial $ F_n(x) $ be expressed as follows:
\begin{equation*}
F_n(x):=\displaystyle{\sum_{k\geq 0}f_{n,k}x^k},\ \mbox{where}\ f_{n,k}:=\#\{S\mid S\mbox{ is a state of }F_n\ and\ |S|=k\}.
\end{equation*}
We are particularly interested in the coefficients of the monomial $ f_{n,2}x^2 $. The expressions \eqref{eq:F3}, \eqref{eq:F1} and \eqref{eq:F2} give us some of the early values. In order to complete the sequence $ (f_{n,2})_{n\geq0} $, we compute the generating polynomial of the $ n$-foil knot in \hyperref[sec:foilknot]{section \ref*{sec:foilknot}} and show that the concerned sequence is actually the lazy caterer's sequence. Such sequence gives the maximal numbers a plane may be divided from a given number of straight lines. In \hyperref[sec:foilknot]{section \ref*{sec:particularset}}, we introduce a particular configuration of straight lines on the plane. This configuration allow us to encode all the present planar regions in \hyperref[sec:planeencoding]{section \ref{sec:planeencoding}}. We encode as well the states diagrams of the $ n $-foil in \hyperref[sec:states]{section \ref*{sec:states}}. These encodings serve as key ingredients that allow us to construct a bijection between the planar regions and the state diagrams in \hyperref[sec:bijection]{section \ref*{sec:bijection}}. 

\section{The generating polynomial of the foil knot}\label{sec:foilknot}
The present section is concerned with the computation of the generating polynomial of the $ n $-foil. Let us begin by presenting another simplest family of knot.

\begin{definition}
We recall that the trivial knot is the \textit{unknot}. If we twist the unknot around itself and project the result into the plane, then we obtain a \textit{twisted loop}. If such knot has $ n $ half-twists, then we call it an \textit{$ n $-twist loop}, and we let $ T_n $ denote this knot.
\end{definition}

The unknot and a $ 5$-twist loop are illustrated in \hyperref[fig:5twistloop]{Figure \ref*{fig:5twistloop}}.

\begin{figure}[H]
\centering
\includegraphics[width=0.5\linewidth]{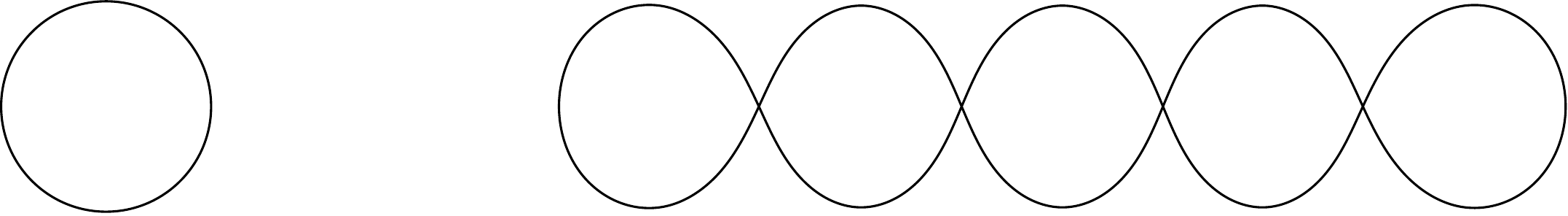}
\caption{The unknot ($ 0 $ half-twist) and a $ 5$-twist loop.}
\label{fig:5twistloop}
\end{figure}

We point out that the $ n$-foil may be obtained from the $ n$-twist loop by removing a small arc from the extremity and gluing the strands together without introducing a new crossing. For example, we see in \hyperref[fig:transf-twist-foil]{Figure \ref*{fig:transf-twist-foil}} how a $ 5 $-foil is obtained from the previous $ 5$-twist loop.

\begin{figure}[H]
\centering
\includegraphics[width=.7\linewidth]{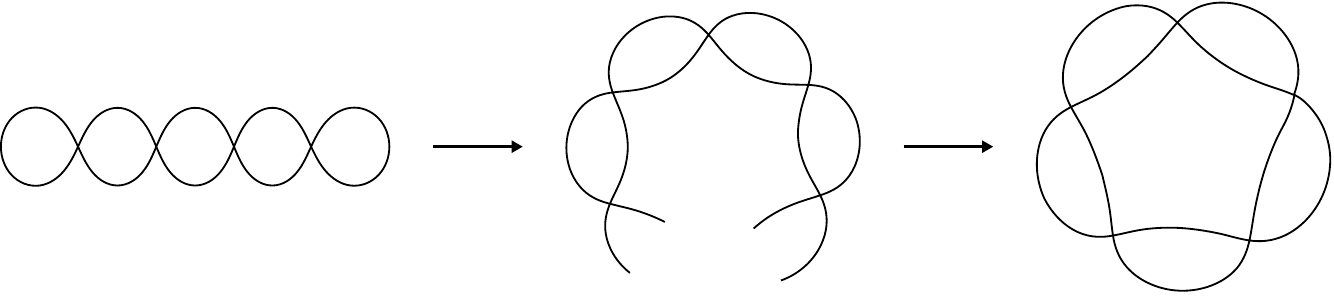}
\caption{From a $ 5$-twist loop to a $ 5$-foil.}
\label{fig:transf-twist-foil}
\end{figure}

Using the same process, we define the diagram of the $ 0$-foil as in \hyperref[fig:0-foil]{Figure \ref*{fig:0-foil}}.

\begin{figure}[H]
\centering
\includegraphics[width=0.8\linewidth]{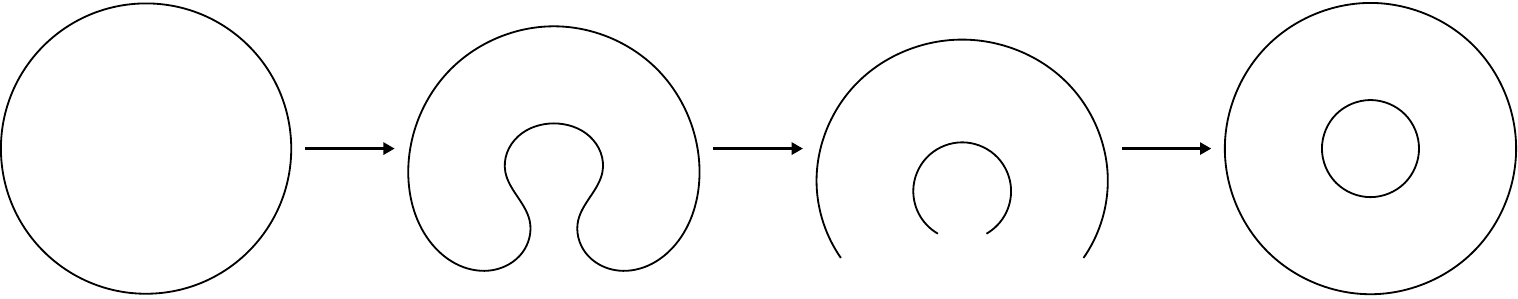}
\caption{The construction of the 0-foil.}
\label{fig:0-foil}
\end{figure}

These definitions already allow us to write the generating polynomial of $ T_0 $ and $ F_0 $. They are respectively given by
\begin{equation*}
T_0(x)=x
\end{equation*}
and
\begin{equation*}
F_0(x)=x^2.
\end{equation*}

Let us next establish the generating polynomial of the $ n $-foil.

\begin{proposition}
The generating polynomial of the $ n $-foil verifies
\begin{equation}\label{eq:foiltwist}
F_n(x)=T_{n-1}(x)+F_{n-1}(x),\ n\geq1, 
\end{equation}
where $ T_{n-1}(x) $ denotes the generating polynomial of the $ (n-1) $-twist loop.
\end{proposition}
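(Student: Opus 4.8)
The plan is to single out one crossing $ c $ of the $ n $-foil and to sort the $ 2^n $ states of $ F_n $ according to the split performed there. Since a state of $ F_n $ amounts to a choice of an $ A $-split or a $ B $-split at each of the $ n $ crossings, fixing the split at $ c $ leaves a free choice at the remaining $ n-1 $ crossings, and the state set partitions as
\begin{equation*}
\{\,\text{states of }F_n\,\}=\{\,\text{states splitting one channel at }c\,\}\ \sqcup\ \{\,\text{states splitting the other channel at }c\,\}.
\end{equation*}
The goal is to show that the two parts are counted, weight by weight, by $ F_{n-1}(x) $ and $ T_{n-1}(x) $.

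First I would carry out the geometric heart of the argument: opening one channel at $ c $ produces a diagram planar isotopic to $ F_{n-1} $, while opening the other channel produces a diagram planar isotopic to $ T_{n-1} $. This is where the foil and the twist loop meet. Opening the channel that reconnects the two strands running straight through $ c $ simply erases that crossing and leaves the remaining $ n-1 $ crossings closed up exactly as a foil, i.e.\ as $ F_{n-1} $. Opening the other channel instead caps the strands off at $ c $; this cap, together with the surrounding closure arcs, seals the remaining $ n-1 $ crossings at both ends, which is precisely the closure defining the twist loop $ T_{n-1} $ (compare the construction of \hyperref[fig:transf-twist-foil]{Figure \ref*{fig:transf-twist-foil}}, read in reverse). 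Verifying this identification with care, and in particular checking that neither split disturbs the other $ n-1 $ crossings, is the step I expect to be the main obstacle, since it rests entirely on a correct reading of how the $ A $- and $ B $-channels interact with the closure arcs.

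Once the two reduced diagrams are identified, the rest is bookkeeping. Each state in the branch reducing to $ F_{n-1} $ is obtained by completing the fixed split at $ c $ with an arbitrary state of $ F_{n-1} $; this is a bijection between that branch and the states of $ F_{n-1} $, and because the crossing $ c $ is already resolved on both sides, the bijection preserves the component count $ |S| $. The same reasoning gives a component-preserving bijection between the other branch and the states of $ T_{n-1} $. Summing $ x^{|S|} $ over each branch therefore reproduces $ F_{n-1}(x) $ and $ T_{n-1}(x) $, and adding the two contributions yields
\begin{equation*}
F_n(x)=\sum_{\text{first branch}}x^{|S|}+\sum_{\text{second branch}}x^{|S|}=F_{n-1}(x)+T_{n-1}(x),
\end{equation*}
as claimed. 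As a sanity check one verifies the smallest case, $ F_1(x)=T_0(x)+F_0(x)=x+x^2 $, which agrees with \eqref{eq:F1}.
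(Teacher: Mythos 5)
Your proposal is correct and follows essentially the same route as the paper: the paper's proof likewise singles out one crossing of the $n$-foil, observes (via its Figure~\ref{fig:nfoilstate}) that the two splits there yield diagrams planar isotopic to $F_{n-1}$ and $T_{n-1}$ respectively, and concludes the recurrence by summing $x^{|S|}$ over the two branches. You have merely spelled out the component-preserving bijection and the geometric identification that the paper leaves to the figure.
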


\begin{proof}
Let us split a crossing of the $ n $-foil as illustrated in \hyperref[fig:nfoilstate]{Figure \ref*{fig:nfoilstate}}. The split either results in a $ (n-1 )$-foil or a $ (n-1) $-twist loop.
	
\begin{figure}[H]
\centering
\includegraphics[width=0.55\linewidth]{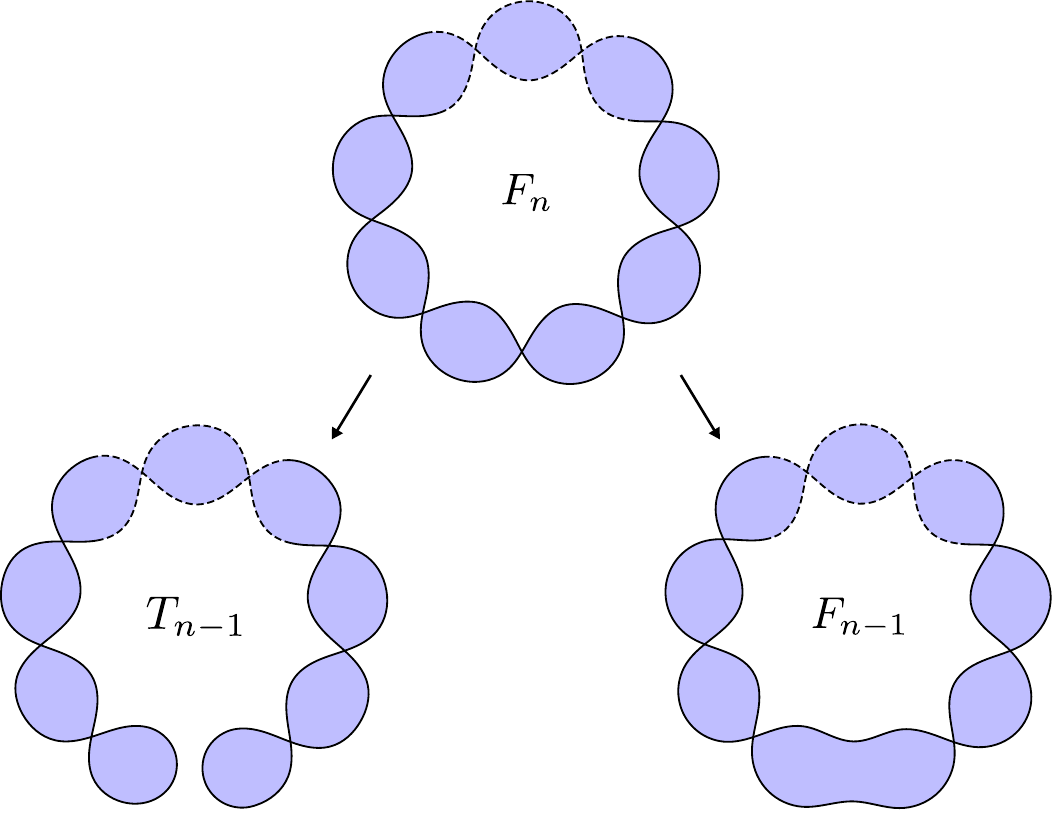}
\caption{The splits of an $ n $-foil crossing.}
\label{fig:nfoilstate}
\end{figure}

Therefore, the generating polynomial is given by
\begin{equation*}
F_n(x)=T_{n-1}(x)+F_{n-1}(x).
\end{equation*}
\end{proof}

Notice that this relationship invokes the generating polynomial of the $ n $-twist loop.

\begin{proposition}
The generating polynomial of the $ n $-twist loop is given by
\begin{equation}\label{eq:twloop}
T_n(x)=x(x+1)^n,\ n\geq 0.
\end{equation}
\end{proposition}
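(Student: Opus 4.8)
The plan is to derive a one-step recurrence for $T_n(x)$ and then solve it by induction on $n$. The natural base case is $T_0(x)=x$, since the $0$-twist loop is the unknot, which admits a single state consisting of one Jordan curve. For the inductive step I would focus on a single crossing of the $n$-twist loop and examine the two ways of splitting it, in direct analogy with the split analysis used for the $n$-foil in the proof of~\eqref{eq:foiltwist}.

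First I would argue that the two splits of a chosen crossing in $T_n$ produce qualitatively different diagrams. Splitting the crossing one way simply removes one half-twist and reconnects the strands so that the result is, up to planar isotopy, an $(n-1)$-twist loop; this contributes $T_{n-1}(x)$ to the state sum. Splitting the crossing the other way pinches off a small disjoint Jordan curve while leaving behind an $(n-1)$-twist loop; since every state of this second diagram then has exactly one more component than the corresponding state of $T_{n-1}$, its contribution to the generating polynomial is $x\,T_{n-1}(x)$. Summing the two contributions gives the recurrence
\begin{equation*}
T_n(x)=T_{n-1}(x)+x\,T_{n-1}(x)=(x+1)\,T_{n-1}(x),\qquad n\geq1.
\end{equation*}

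With the recurrence in hand, the remainder is a routine induction: assuming $T_{n-1}(x)=x(x+1)^{n-1}$ and multiplying by $(x+1)$ immediately yields $T_n(x)=x(x+1)^n$, while the base case $T_0(x)=x=x(x+1)^0$ starts the induction. As a sanity check, the case $n=1$ produces $T_1(x)=x^2+x$, corresponding to the two states of a single kink (one circle versus two disjoint circles), which is consistent with the data already recorded in the excerpt.

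The step I expect to be the main obstacle is the geometric bookkeeping in the recurrence, specifically verifying that one split genuinely yields an isolated extra component (the factor $x$) while the other reproduces the $(n-1)$-twist loop with no change in component count. This is most convincingly settled by a figure displaying the two smoothings of a single crossing in the twist region, exactly as was done for the foil in \hyperref[fig:nfoilstate]{Figure~\ref*{fig:nfoilstate}}; once that picture is accepted, the algebra is immediate.
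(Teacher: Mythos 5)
Your proposal is correct and follows essentially the same route as the paper: split one crossing of the $n$-twist loop to obtain either an $(n-1)$-twist loop or an $(n-1)$-twist loop with an extra disjoint unknot, yielding the recurrence $T_n(x)=T_{n-1}(x)+x\,T_{n-1}(x)$, which is solved with the base case $T_0(x)=x$. Your added justification that the disjoint-circle split contributes a factor of $x$ (each state gaining exactly one component) makes explicit what the paper leaves to its figure.
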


\begin{proof}
Let $ n \geq 1$. The key point is to split the leftmost crossing of the $ n $-twist loop. Either we obtain a disjoint union of an $ (n-1) $-twist loop and the unknot, or we obtain an $ (n-1) $-twist loop.

\begin{figure}[H]
\centering
\includegraphics[width=0.8\linewidth]{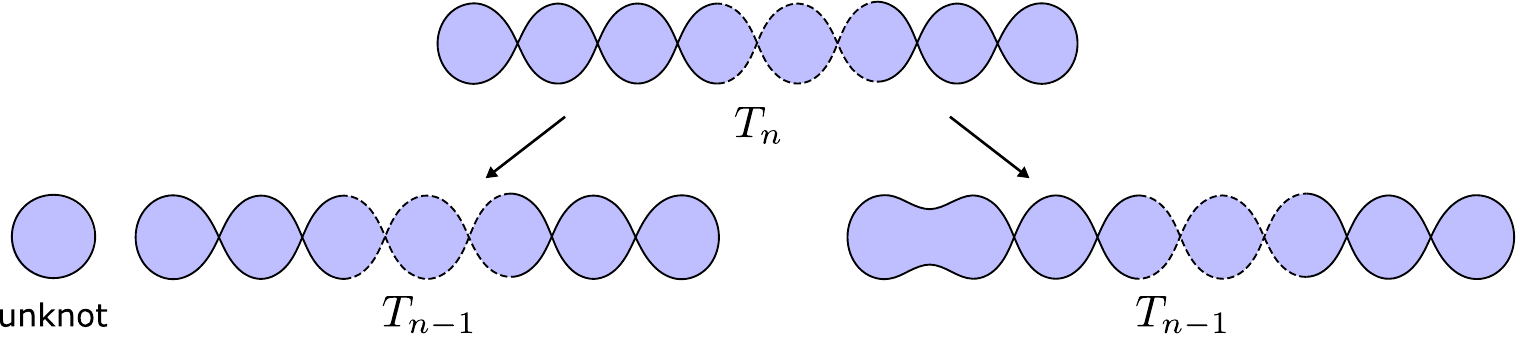}
\caption{The splits of an $ n $-twist loop crossing.}
\label{fig:twistloop}
\end{figure}

Therefore we write the corresponding generating polynomial as follows:
\begin{equation}
T_n(x)=xT_{n-1}(x)+T_{n-1}(x).
\end{equation}
Taking into account the expression $$ T_0(x)=x, $$ we have
\begin{equation}
T_n(x)=x(x+1)^n.
\end{equation}
\end{proof}

We let $ t_{n,k} $ denote the coefficients of the polynomial $ T_n(x) $, and we give the corresponding values for $ 0\leq n\leq 6 $ and $ 0\leq k\leq 7 $ in \hyperref[tab:twistloop]{Table \ref*{tab:twistloop}}.

\begin{table}[H]
\[
\begin{array}{c|rrrrrrrr}
n\ \backslash\ k	&0	&1	&2	&3	&4	&5	&6	&7\\
\midrule
0	&0	&1	&	&	&	&	&	&\\
1	&0	&1	&1	&	&	&	&	&\\
2	&0	&1	&2	&1	&	&	&	&\\
3	&0	&1	&3	&3	&1	&	&	&\\
4	&0	&1	&4	&6	&4	&1	&	&\\
5	&0	&1	&5	&10	&10	&5	&1	&\\
6	&0	&1	&6	&15	&20	&15	&6	&1
\end{array}
\]
\caption{Values of $ t_{n,k} $ for $ 0\leq n\leq 6 $ and $ 0\leq k\leq 7 $.}
\label{tab:twistloop}
\end{table}

The coefficients in \hyperref[tab:twistloop]{Table \ref*{tab:twistloop}} are those of the binomial expansion with a horizontal shift. The values of $ t_{n,k} $ for $ n\geq k-1 $ are given by $ \binom{n}{k-1} $. Such expression exactly matches  the property of the enumeration of the twist loop states. In order to obtain a $ k $-state, we must choose $ k-1 $ crossings at which we apply an $ A $ split. At this point, our twist loop is divided into $ k $ ``smaller'' pieces of twist loops. We then apply a $ B $-split at each of the remaining crossings. Consider the following figure.

\begin{figure}[H]
\centering
\includegraphics[width=0.65\linewidth]{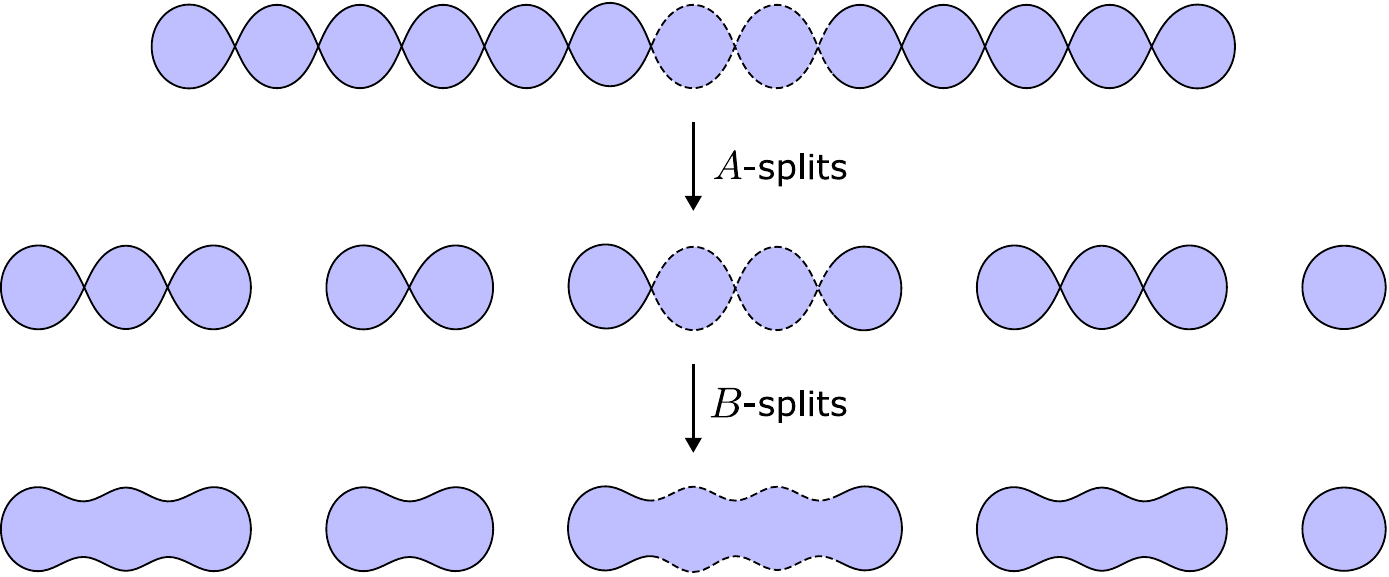}
\caption{A series of $ k $ $ A $-splits and $ n-k $ $ B $-splits produces $ k+1 $ components.}
\label{fig:ABsplittwistloop}
\end{figure}

Let us pay a special attention to the following integer sequences.

\begin{itemize}
\item The zero sequence \seqnum{A000004}:
\[	0, 0, 0, 0, 0, 0, 0, 0, 0, 0, 0, 0, 0, 0, 0, 0, 0, 0, 0, 0, \ldots\]
Regardless of the number of crossings, a state consists at least of one component. Such constraint holds for any knot diagram.

\item The simplest sequence of positive numbers, the all $ 1 $'s sequence \seqnum{A000012}:
\[	1, 1, 1, 1, 1, 1, 1, 1, 1, 1, 1, 1, 1, 1, 1, 1, 1, 1, 1, 1, \ldots\]
Let us distinguish the second column $ (t_{n,n+1})_{n\geq 0} $ and the diagonal $ (t_{n,1})_{n\geq 0} $.
\begin{itemize}
\item $ (t_{n,n+1})_{n\geq 0} $: we obtain one $ (n+1) $-state in a unique way, that is by applying a $ A $-split at every crossing.

\begin{figure}[H]
\centering
\includegraphics[width=0.65
\linewidth]{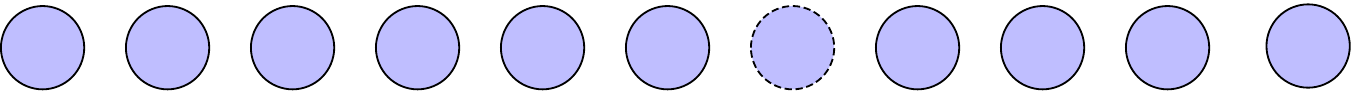}
\caption{A ``full'' $ A $-split results to a $(n+1) $-state.}
\label{fig:Asplittwistloop}
\end{figure}
		
\item $ (t_{n,1})_{n\geq 0} $: conversely, we have one possibility of obtaining a one component state by only applying a $ B $-split at every crossing.

\begin{figure}[H]
\centering
\includegraphics[width=0.5
\linewidth]{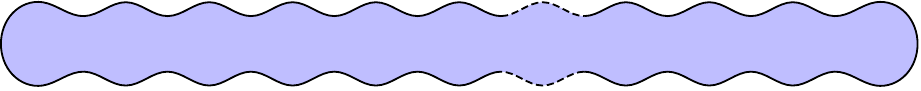}
\caption{A ``full'' $ A $-split results to a $ 1 $-state.}
\label{fig:Bsplittwistloop}
\end{figure}
\end{itemize}

\item The sequence of nonnegative integers \seqnum{A001477}:
\[0,1,2,3,4,5,6,7,8,9,10,11,12,13,14,15,16,17,\ldots\]
Consider the diagonal $ (t_{n,n})_{n\geq 0} $ whose general term is $ t_{n,n}=\binom{n}{n-1}=n $. We choose $ n-1 $ crossings at which we apply a $ B $-split. At the one crossing left, we apply an $ A $-split so that we now end up to $ n $ components.
\end{itemize}

Finally, we may express the polynomial $ F_n(x) $ as the following closed form.

\begin{corollary}
The generating polynomial of the $ n $-foil knot is given by
\begin{equation}\label{eq:foil}
F_n(x)=(x+1)^n+x^2-1,\ n\geq 0 .
\end{equation}
\end{corollary}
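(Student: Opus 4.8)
The plan is to combine the two preceding propositions and iterate the recurrence until it bottoms out at the known base value $F_0(x) = x^2$. Concretely, I would first substitute the closed form $T_n(x) = x(x+1)^n$ into the recurrence $F_n(x) = T_{n-1}(x) + F_{n-1}(x)$, so that each application of the recursion contributes a term of the shape $x(x+1)^{n-1}$. Once this substitution is made, the statement becomes a purely algebraic matter.

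There are two natural routes to finish, and I would favor induction on $n$ for its brevity. The base case $n = 0$ is immediate, since $(x+1)^0 + x^2 - 1 = x^2 = F_0(x)$. For the inductive step, assuming the formula holds for $n-1$, I would write $F_n(x) = x(x+1)^{n-1} + \left[(x+1)^{n-1} + x^2 - 1\right]$ and collect the first two terms. The only genuine manipulation is the factorization $x(x+1)^{n-1} + (x+1)^{n-1} = (x+1)(x+1)^{n-1} = (x+1)^n$, after which the claimed closed form drops out directly.

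Alternatively, I would telescope the recurrence all the way down to the base case, obtaining $F_n(x) = x^2 + \sum_{j=0}^{n-1} x(x+1)^j$, and then evaluate the finite geometric series $\sum_{j=0}^{n-1}(x+1)^j = \frac{(x+1)^n - 1}{x}$; multiplying through by $x$ and adding $x^2$ reproduces the same answer. This second route has the virtue of making transparent \emph{why} the two extra terms appear in the closed form: the $x^2$ is simply inherited from $F_0(x)$, while the $-1$ is precisely the boundary term left over when the geometric series is summed.

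I do not expect a genuine obstacle, since the statement is a direct consequence of the two propositions already established. The only point demanding any care is the bookkeeping in the geometric-series route (the index shift and the factor $\tfrac{1}{x}$), or equivalently, ensuring that the factorization in the inductive step is carried out cleanly; both become routine the moment the substitution $T_n(x) = x(x+1)^n$ is in place.
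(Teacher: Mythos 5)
Your proposal is correct, and your second (telescoping) route is exactly the paper's own proof: sum the recurrence down to $F_0(x)=x^2$, substitute $T_k(x)=x(x+1)^k$, and evaluate the geometric series $x\sum_{k=0}^{n-1}(x+1)^k=(x+1)^n-1$. The induction variant you favor is just a repackaging of the same computation, so there is no substantive difference from the paper.
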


\begin{proof}
Taking into consideration the relation (\ref{eq:twloop}) and the fact that $ F_0(x)=x^2 $, we have
\begin{align*}
F_n(x)&=\sum_{k=0}^{n-1}T_k(x)+F_0(x)\\
&=x\sum_{k=0}^{n-1}(x+1)^k+x^2\\
F_n(x)&=(x+1)^n-1+x^2.
\end{align*}
\end{proof}

Again, arranging the coefficients of $ F_n(x) $ as previously, we have \hyperref[tab:gpfoil]{Table \ref*{tab:gpfoil}}.
\begin{table}[H]
\centering
\[
\begin{array}{c|rrrrrrrrrrrrr}
n\ \backslash\ k	&0	&1	&2	&3	&4	&5	&6	&7	&8	&9	&10	&11	&12\\
\midrule
0	&0	&0	&1	&	&	&	&	&	&	&	&	&	&\\
1	&0	&1	&1	&	&	&	&	&	&	&	&	&	&\\
2	&0	&2	&2	&	&	&	&	&	&	&	&	&	&\\
3	&0	&3	&4	&1	&	&	&	&	&	&	&	&	&\\
4	&0	&4	&7	&4	&1	&	&	&	&	&	&	&	&\\
5	&0	&5	&11	&10	&5	&1	&	&	&	&	&	&	&\\
6	&0	&6	&16	&20	&15	&6	&1	&	&	&	&	&	&\\
7	&0	&7	&22	&35	&35	&21	&7	&1	&	&	&	&	&\\
8	&0	&8	&29	&56	&70	&56	&28	&8	&1	&	&	&	&\\
9	&0	&9	&37	&84	&126	&126	&84	&36	&9	&1	&	&	&\\
10	&0	&10	&46	&120	&210	&252	&210	&120	&45	&10	&1	&	&\\
11	&0	&11	&56	&165	&330	&462	&462	&330	&165	&55	&11	&1	&\\
12	&0	&12	&67	&220	&495	&792	&924	&792	&465	&220	&66	&12	&1
\end{array}
\]
\label{tab:gpfoil}
\caption{Values of $ f_{n,k} $ for $ 0\leq n\leq k\leq 12 $.}
\end{table}

First of all, notice from the formula \eqref{eq:foil} that the coefficients in \hyperref[tab:gpfoil]{Table \ref*{tab:gpfoil}} are those of the binomial expansion where some alterations appear at the monomials $ f_{0,n}$, $f_{0,1}x $ and $ f_{n,2}x^2 $. We write
\[
f_{n,k}=\begin{cases}
\binom{n}{1}-1,& \textit{if $ k=1 $};\\
\binom{n}{2}+1,& \textit{if $ k=2 $};\\
\binom{n}{k},& \textit{if $ k\geq 3 $}.
\end{cases}
\]

We have the following observations.

\begin{itemize}
\item The common constraint $ f_{n,0}=0 $, $ n\geq 0 $, which gives the sequence \seqnum{A000004} as previously:
\[0, 0, 0, 0, 0, 0, 0, 0, 0, 0, 0, 0, 0, 0, 0, 0, 0, 0, 0, 0, \ldots\]

\item The third column $ (f_{n,1})_{n\geq0}$ which is the sequence of nonnegative integers \seqnum{A001477}:
\[0,1,2,3,4,5,6,7,8,9,10,11,12,13,14,15,16,17,\ldots\]
We have $ f_{n,1}=n$ for any value of $ n $. When $ n\geq1 $, the sequence $ (f_{n,1})_{n\geq0} $ represents the number of $ 1 $-states that is obtained by applying a split of type $ A $ at a chosen crossing and by applying a split of type $ B $ at the remaining $ n-1 $ crossings. There are $ \binom{n}{1} $ possibilities of such operation. Since the foil $ F_0 $ is already a mutually disjoint union of two unknots, then we justify the value $ f_{0,1}=0 $.

\item The third column $ (f_{n,2})_{n\geq1} $ is known as the \textit{lazy caterer's sequence} \seqnum{A000124}:
\[1, 2, 4, 7, 11, 16, 22, 29, 37, 46, 56, 67, 79, 92, 106,\ldots\]
We will give more details about the splits sequence of the $ 2$-states in section \ref{sec:states}. We have $ f_{0,2}=1 $ because the knot $ F_2 $ is itself a $ 2 $-state.

\item The remaining columns $ (f_{n,k})_{n\geq k} $ where $ k\geq3 $ represent the usual binomial coefficients which express here the number of $ k $-states. We first choose $ k $ crossings at which we apply an $ A $-split. The result is a mutually disjoint of $ k $ twist loops. The $ k $-state is then obtained by applying a $ B $-split at each of these $ k $ twist loop crossing. For instance, consider the following illustration. 
	
\begin{figure}[H]
\centering
\includegraphics[width=0.9\linewidth]{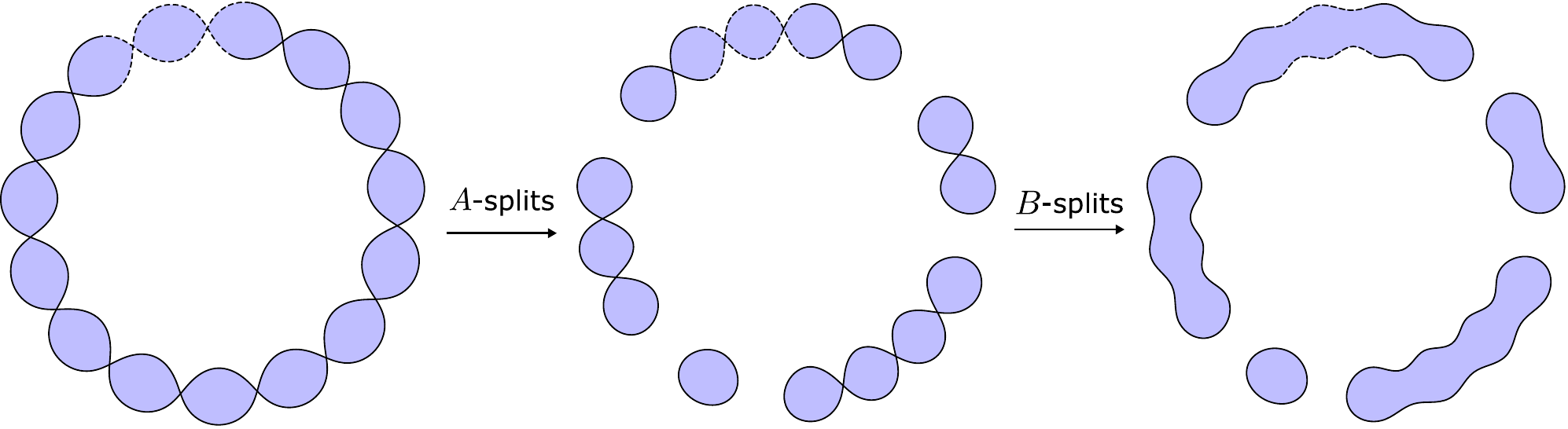}
\caption{A series of $ k $ $ A $-splits and $ n-k $ $ B $-splits which produces $ k $ components.}
\label{fig:Asplit}
\end{figure}

\item Let us also look at the diagonal $ (t_{n,n})_{n\geq 0} $: 
\[0,1,2,1,1,1,1,1,1,1,1,1,1,1,1,1,1,1,1,1,1,1,1,\ldots\]
\begin{itemize}
\item $ t_{0,0}=0 $ represents the usual constraint.

\item $ t_{1,1}=1 $ corresponds to the $ 1 $-state that is obtained by applying a $ B $-split at the single crossing. See \hyperref[sub:t11]{Figure \ref*{sub:t11}}.

\item $ t_{1,1}=2$ refers to the $ 2 $-states that are obtained by a couple of $ B $-splits or a couple of $A $-splits. See \hyperref[sub:t22]{Figure \ref*{sub:t22}}.

\item $ t_{n,n}=1,\ n\geq 3 $, correspond to the unique splits sequence that gives an $ n $-state. It consists of a ``full'' $ A $-split. See \hyperref[sub:Asplitfoil]{Figure \ref*{sub:Asplitfoil}}.

\begin{figure}[H]
\centering
\hspace*{\fill}
\subfigure[$ n=1 $\label{sub:t11}]{\includegraphics[width=0.11\linewidth]{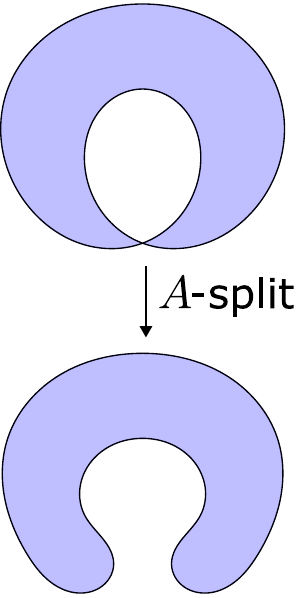}}\hfill%
\subfigure[$ n=2 $\label{sub:t22}]{\includegraphics[width=0.3\linewidth]{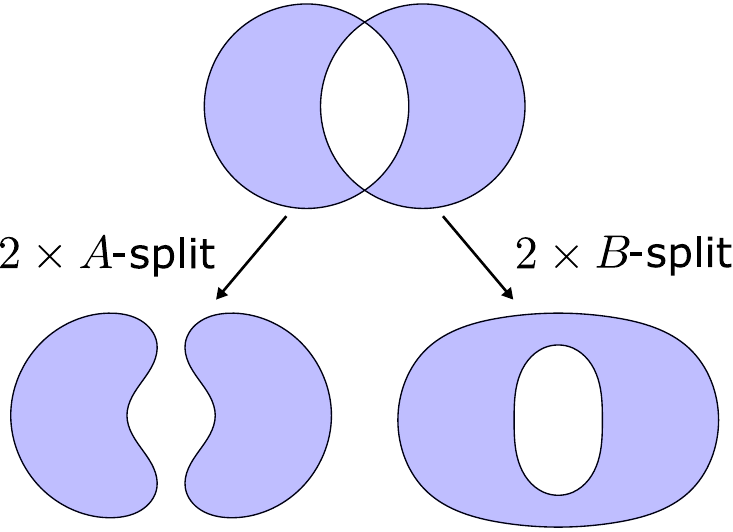}}\hfill%
\subfigure[$ n\geq3$\label{sub:Asplitfoil}]{\includegraphics[width=0.4\linewidth]{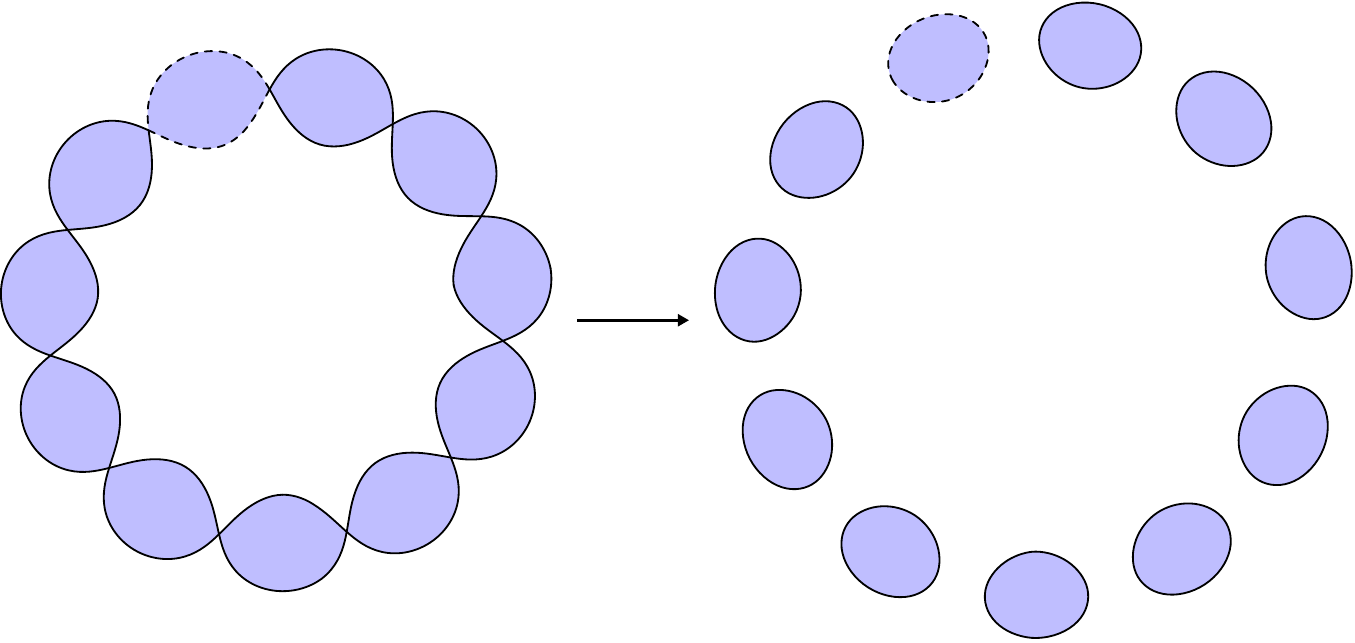}}
\hspace*{\fill}
\caption{The occurrences of the $ n $-states.}
\label{fig:t012}
\end{figure}

\end{itemize}
\end{itemize}

Let us now focus on the lazy caterer's sequence. Geometrically such sequence gives the number of regions on the plane defined by a set of lines in general arrangement. The next section covers some geometric properties of a family of lines in such arrangement.

\section{Lines in general arrangement}\label{sec:particularset}

Steiner \cite{steiner} proposed the following problem in 1826: what is the maximum number of pieces of a circle, a pancake or a pizza that
can be made with a given number of straight cuts? If we let $ P_n $ denote the maximum number of pieces when a circle is cut $ n $ times, then the answer is given by the recurrence relation
\begin{equation}\label{eq:line}
P_n=P_{n-1}+n,\mbox{ where } P_0=1. 
\end{equation}
The number $ P_n $ has the following closed form
\begin{equation}\label{eq:line-closed}
P_n=\dfrac{n^2+n+2}{2}
\end{equation}
which gives the lazy caterer's sequence \seqnum{A000124}:
\[1, 2, 4, 7, 11, 16, 22, 29, 37, 46, 56, 67, 79, 92, 106,\ldots\]

The relation \eqref{eq:line} describes the maximum number $ P_n $ of regions defined by $ n $ lines in the plane \cite[p.\ 4]{Concrete} in accordance to the following geometrical construction: we suppose there are already $n-1$ lines subject to a particular geometric configuration that guarantees the maximality of the planar regions number. When we add a new line, the latter increases the number of regions by $k$ if and only if it intersects each of the other lines in $ k-1 $ different places. It is clear that for $ k\leq n $, the number of regions will be the largest only if there is no point common to three lines. Therefore the new line may cut each of the $ n-1 $ lines in at most $ n-1 $ different points, i.e.,
\[
P_n\leq P_{n-1}+n.
\]
This upper bound is reached when the lines meet the following criteria.

\begin{definition}
We say that straight lines are in \textit{general arrangement} if no two lines are parallel, and no	three lines are concurrent.
\end{definition} 

Let us then construct a particular set of straight lines in general arrangement such that the resulting regions can easily be encoded.

\begin{definition}
The line $\mathcal{L}: ax+by+c=0 $ divides the Cartesian plane into two \textit{half-planes} characterized by the sign of $ H(x,y)=ax+by+c $. One of these half-planes satisfies $ ax+by+c>0 $ while the other half-plane satisfies $ ax+by+c<0 $. The line $ ax+by+c=0 $ is called the \textit{boundary} of the half-planes. 

The set of points in the Cartesian plane which lie above the line forms the \textit{upper half-plane}, and the set of points which lie below the line forms the \textit{lower half-plane}. 

A \textit{test point}, which lies on one side or the other of this line, is used to decide whether the solution of the inequality is the region above or below the straight line.
\end{definition}

\begin{notation}
Let $ (\mathcal{L}_n)_{n\geq0} $ denote the family of lines of equation $ y=l_n(x)=nx-n^2 $, and let $ (\mathcal{H}_n)_{n\geq0} $ denote the family of half-planes defined by $H_n(x,y)=y-nx+n^2 $.
\end{notation}

The following proposition exhibits an immediate property of this family of straight lines.

\begin{proposition}
The lines $(\mathcal{L}_n)_{n\geq0} $ are in general arrangement.
\end{proposition}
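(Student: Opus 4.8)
The plan is to verify the two defining conditions of \emph{general arrangement} directly from the explicit equation $y=l_n(x)=nx-n^2$. First I would check that no two lines are parallel: two lines $\mathcal{L}_m$ and $\mathcal{L}_n$ with $m\neq n$ have slopes $m$ and $n$ respectively, and since $m\neq n$ implies the slopes differ, the lines cannot be parallel. This step is immediate from reading off the coefficient of $x$.

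Next I would show that no three lines are concurrent. The natural approach is to first compute the unique intersection point of any two distinct lines $\mathcal{L}_m$ and $\mathcal{L}_n$. Setting $mx-m^2=nx-n^2$ gives $(m-n)x=m^2-n^2=(m-n)(m+n)$, so (dividing by the nonzero factor $m-n$) the $x$-coordinate of the intersection is $x=m+n$, and substituting back yields $y=mn$. Thus $\mathcal{L}_m\cap\mathcal{L}_n=\{(m+n,\,mn)\}$, a single point. The key observation is that this point determines $m$ and $n$ (as the roots of $t^2-(m+n)t+mn=0$), which is the crux of the non-concurrency argument.

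To rule out three concurrent lines, I would suppose $\mathcal{L}_m$, $\mathcal{L}_n$, $\mathcal{L}_p$ with $m,n,p$ pairwise distinct all pass through a common point $(X,Y)$. Applying the intersection formula to each pair forces $X=m+n=m+p$, which immediately gives $n=p$, contradicting distinctness. Hence no three of the lines meet at a single point, and together with the first step this establishes that the family is in general arrangement.

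I do not anticipate a serious obstacle here, since both conditions reduce to elementary algebra once the intersection point is computed; the only point requiring care is justifying the division by $m-n$, which is valid precisely because we assume the lines are distinct. The most efficient presentation is to solve for the pairwise intersection once and then deduce both the parallelism and concurrency claims as corollaries of that single computation.
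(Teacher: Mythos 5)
Your proof is correct and takes essentially the same approach as the paper: verify non-parallelism from the distinct slopes, compute the pairwise intersection point $(m+n,\,mn)$, and rule out a third line through it by elementary algebra. The only cosmetic difference is in the last step, where the paper substitutes the point into the third line's equation and notes that the resulting quadratic $r^2-(m+n)r+mn=0$ has roots $m$ and $n$, while you compare $x$-coordinates across the pairwise intersection formulas; the two arguments are equivalent.
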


\begin{proof} 
It is clear that for a couple of nonnegative integers $ (p,q) $ such that $ p\neq q $, the corresponding lines $ \mathcal{L}_p $ and $ \mathcal{L}_q $ have different slope and meet at the point $ \big(p+q,pq\big) $. 
	
Now, let $ r $ be another nonnegative integer such that $ r\neq p $ and $ r\neq q $. If the line $ \mathcal{L}_r $ passes through the point $ \big(p+q,pq\big), $ then the following equality must hold
\begin{equation}\label{eq:2ndeg}
r^2-(p+q)r+pq=0.
\end{equation} However, the roots of the quadratic equation \eqref{eq:2ndeg} are $ r=p $ and $ r=q $. Therefore, triple points do not occur.
\end{proof}

\begin{notation}
Let $\tau (-1,1) $ be the test point. From the inequality $$ H_n(-1,1)=1+n+n^2>0 \mbox{ where } n\geq0,$$ we let $ \mathcal{H}_n^+ $ denote the upper half-plane that contains $ \tau $, and let $ \mathcal{H}_n^- $ denote the lower half-plane, both characterized by $ H_n(x,y) $.
\end{notation}

\begin{proposition}\label{prop:upperintersection}
For three nonnegative integers $ p$, $ q $, $r $ such that $ p<q<r $, the intersection point $ (p+q,pq) $ of the lines $ \mathcal{L}_p $ and $ \mathcal{L}_q $ is located at the upper half-plane $ \mathcal{H}_r^+ $.
\end{proposition}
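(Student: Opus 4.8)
The plan is to reduce the geometric claim to a sign computation and then to a factorization that is already essentially carried out in the preceding proposition. By definition, a point lies in $\mathcal{H}_r^+$ precisely when $H_r$ is positive there, since $\mathcal{H}_r^+$ is the half-plane containing the test point $\tau$, at which $H_r(-1,1)=1+r+r^2>0$. So the whole statement amounts to checking that $H_r$ is strictly positive at the intersection point $(p+q,pq)$.

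First I would substitute $(x,y)=(p+q,pq)$ into $H_r(x,y)=y-rx+r^2$, obtaining
\[
H_r(p+q,pq)=pq-r(p+q)+r^2=r^2-(p+q)r+pq.
\]
The key observation is that this is exactly the left-hand side of the quadratic equation \eqref{eq:2ndeg}, whose roots were shown in the previous proposition to be $p$ and $q$. Hence the expression factors as $(r-p)(r-q)$, and the problem is now purely a matter of signs.

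Finally I would invoke the hypothesis $p<q<r$, which gives $r-p>0$ and $r-q>0$, so that the product $(r-p)(r-q)$ is strictly positive. Therefore $H_r(p+q,pq)>0$, which places $(p+q,pq)$ in $\mathcal{H}_r^+$ and completes the argument.

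There is really no serious obstacle here; the proof is a one-line computation once one spots that $H_r$ evaluated at the intersection point reproduces the quadratic from \eqref{eq:2ndeg}. The only point worth flagging is the role of the ordering: it is essential that $r$ be strictly larger than \emph{both} $p$ and $q$, since this is what forces the two factors $r-p$ and $r-q$ to share the same (positive) sign. Were $r$ to lie strictly between $p$ and $q$, the two factors would have opposite signs and the intersection point would instead fall in $\mathcal{H}_r^-$.
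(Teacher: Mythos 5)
Your proof is correct and takes essentially the same route as the paper: both substitute the intersection point into $H_r$ and reduce the claim to the positivity of $r^2-(p+q)r+pq$. The only difference is cosmetic --- you factor this expression as $(r-p)(r-q)$ by recognizing the quadratic \eqref{eq:2ndeg}, while the paper treats the same quantity as an affine function of $p$ with root at $p=r$ and negative slope $q-r$, which amounts to the identical factorization.
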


\begin{proof}
Without loss of generality, we may fix $ r $ and $ q $ then show that for any $ \widehat{p}<q $ the inequality $ H_r(\widehat{p},q)>0 $ holds. We have 
\begin{equation}\label{eq:affine}
H_r(\widehat{p}+q,\widehat{p}q)=\widehat{p}(q-r)+r(r-q).
\end{equation}
The sign of the affine function \eqref{eq:affine} with respect to the variable $ \widehat{p} $ is given by the sign of $ q-r $ for $ \widehat{p}<r $. Hence
\[
H_r(\widehat{p}+q,pq)>0\ \mbox{ for }\widehat{p}<r.
\]
\end{proof}

\begin{proposition}\label{prop:arrangement}
Let $ n $ lines be arranged on the plane, and let $ (a,b)$ be a point inside the lower half-plane $ \mathcal{H}_n^- $ such that $ y-kx+k^2\neq0$ for any nonnegative integer $ k $ verifying $ k\leq n $ (this is to ensure that the point $ (a,b) $ does not lie on a boundary line). 

There exists a nonnegative integer $ r $ satisfying one of the following conditions:
\begin{enumerate}
\item either $ (a,b)\in \mathcal{H}_{r}^+\cap \mathcal{H}_{r+1}^- $,

\item or $ (a,b)\in \mathcal{H}_0^- $.
\end{enumerate}
\end{proposition}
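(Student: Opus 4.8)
The plan is to collapse the two-variable geometry into a single-variable sign problem. Fix the point $(a,b)$ and define the one-parameter function
\begin{equation*}
g(k) := H_k(a,b) = b - ka + k^2, \qquad k = 0, 1, \dots, n.
\end{equation*}
Because the orientation of every half-plane is fixed by the common test point $\tau$, we have $(a,b) \in \mathcal{H}_k^+$ exactly when $g(k) > 0$ and $(a,b) \in \mathcal{H}_k^-$ exactly when $g(k) < 0$. Under this dictionary the hypothesis $(a,b) \in \mathcal{H}_n^-$ becomes $g(n) < 0$, the non-degeneracy assumption becomes $g(k) \neq 0$ for every $0 \le k \le n$, and the two alternatives to be established read: either (1) there is an index $r$ with $g(r) > 0$ and $g(r+1) < 0$, or (2) $g(0) < 0$.

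First I would dispose of the trivial branch: if $g(0) < 0$ then $(a,b) \in \mathcal{H}_0^-$, which is alternative (2), and we are done. Hence I may assume $g(0) > 0$, the value being nonzero by hypothesis. Now the finite sequence $g(0), g(1), \dots, g(n)$ opens with a positive term, closes with a negative term, and never vanishes. Let $j$ be the least index for which $g(j) < 0$; then $j \ge 1$, and the minimality of $j$ together with $g(j-1) \neq 0$ forces $g(j-1) > 0$. Taking $r := j - 1$ yields $g(r) > 0$ and $g(r+1) < 0$, that is, $(a,b) \in \mathcal{H}_r^+ \cap \mathcal{H}_{r+1}^-$, which is alternative (1). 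Since $1 \le j \le n$, the index $r$ is a nonnegative integer with $r$ and $r+1$ both lying in $\{0, 1, \dots, n\}$.

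This is in essence a discrete intermediate-value argument, so I do not anticipate a genuine obstacle; the only steps that demand care are keeping the sign dictionary straight (so that the two geometric alternatives are translated into the correct strict inequalities on $g$, with the orientation anchored by $\tau$) and checking that the produced index lands in the admissible range. As a consistency check, and to connect with Proposition~\ref{prop:upperintersection}, it is worth noting that $g(k) = k^2 - ak + b$ is convex in $k$, i.e.\ an upward parabola; consequently it can switch from positive to negative at most once along the increasing integers, so the index $r$ furnished above is actually unique. Convexity is not required for existence, but it confirms that the strip $\mathcal{H}_r^+ \cap \mathcal{H}_{r+1}^-$ locating $(a,b)$ is unambiguous.
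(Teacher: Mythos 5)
Your proof is correct, and it takes a genuinely different route from the paper's. The paper argues geometrically: it leans on Proposition \ref{prop:upperintersection} (all intersection points lie in the upper half-plane) together with the ordering $l_p(x)<l_q(x)$ for $p<q$ and $x>p+q$, to conclude that inside $\mathcal{H}_n^-$ the lines appear stacked in ascending order; it then splits on the sign of $b$ and, when $b>0$, produces the full chain $l_0(a)<l_1(a)<\cdots<l_r(a)<b<l_{r+1}(a)$. You bypass all of this with a discrete intermediate-value argument on the sign sequence $g(k)=H_k(a,b)$: after disposing of the branch $g(0)=b<0$ (which is exactly the paper's second case), the sequence starts positive, ends negative by the hypothesis $g(n)<0$, and never vanishes, so the least index $j$ with $g(j)<0$ gives $r=j-1$. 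This is shorter, self-contained, and proves the proposition exactly as stated, without any input from the earlier geometric results. What the paper's heavier argument buys is the stronger ``slab'' structure --- the point lies above \emph{all} of $\mathcal{L}_0,\dots,\mathcal{L}_r$ and below all later lines --- which is what the encoding of the new regions as $1^k0^{n-k}$ in the following section actually exploits; your existence argument alone does not deliver this, but your closing convexity remark does: since $g$ is an upward parabola in $k$, the set of indices with $g(k)<0$ is a final segment of $\{0,1,\dots,n\}$, so the sign pattern is $+\cdots+-\cdots-$, which recovers the same structure and the uniqueness of $r$.
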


\begin{proof}
Since all the intersection points are located at the upper plane $ \mathcal{H}_n^+ $, the key ingredient is to show that for any nonnegative integers $ p$, $q $ such that $ p<q<n $, and for a any point $(x,y) $ located at the lower half-plane $ \mathcal{H}_n^-$, the inequality $l_p(x)<l_q(x) $ holds when $ x>p+q $. 

We verify such inequality by writing
\begin{equation}\label{eq:linear}
l_p(x)-l_q(x)=(p-q)x-\left(p^2-q^2\right).
\end{equation}
The root of the linear polynomial \eqref{eq:linear} is $ x=p+q $. Therefore, we have $ l_p(x)-l_q(x)>0 $ when $ x>p+q $. This means that $ \mathcal{H}_n ^-$ is partitioned in way that the half-lines which start from their respective intersection point with the line $ \mathcal{L}_n $ are arranged one above another. We conclude by distinguishing the sign of $ b $.

\begin{enumerate}
\item If $b>0 $ and $ (a,b) \in \mathcal{H}_n^-$, then there exists a nonnegative integer $ r $ such that we can recursively write the following inequalities
\[
l_0(a)<l_1(a)<l_2(a)<\cdots<l_r(a)<b< l_{r+1}(a).
\]
The last two inequalities imply that $(a,b)\in \mathcal{H}_{r}^+\cap \mathcal{H}_{r+1}^-$.

\item If $ b<0 $ and $ (a,b) \in \mathcal{H}_n^-$, then $ (a,b)\in \mathcal{H}_n^-\cap \mathcal{H}_0^- $.
\end{enumerate}
\end{proof}

\section{Encoding the regions on the plane}\label{sec:planeencoding}

Let us now encode each region of the plane when a finite lines of the family $ (\mathcal{L}_n)_{n\geq0} $ are drawn. We associate a region with a binary word of length $ n $ whose letters are from the set $ \{0,1\} $. The $ i $-th letter $ \sigma_i $, where $ 0\leq i\leq n-1, $ indicates whether the region lies or not at either of the $i $-th lower or upper half-plane:
\begin{itemize}
\item if the region lies at the lower half-plane $ \mathcal{H}_{i}^- $, then we write $ \sigma_i =0$;

\item if the region lies at the upper half-plane $ \mathcal{H}_{i}^+ $, then we write $ \sigma_i =1$.
\end{itemize}

We let $ \mathcal{P}_n $ denote the set of such words. This set labels the resulting regions when the lines $ \mathcal{L}_0,\mathcal{L}_1,\ldots,\mathcal{L}_{n-1}$ are arranged on the plane. A region is therefore uniquely identified with a words from the set $ \mathcal{P}_n $. 

As an example, let us define the set $ \mathcal{P}_4 $ with the help of \hyperref[fig:P3]{Figure \ref*{fig:P3}}. We use $ \tau(-1,1) $ as the test point.

\begin{figure}[H]
\centering
\includegraphics[width=.9\linewidth]{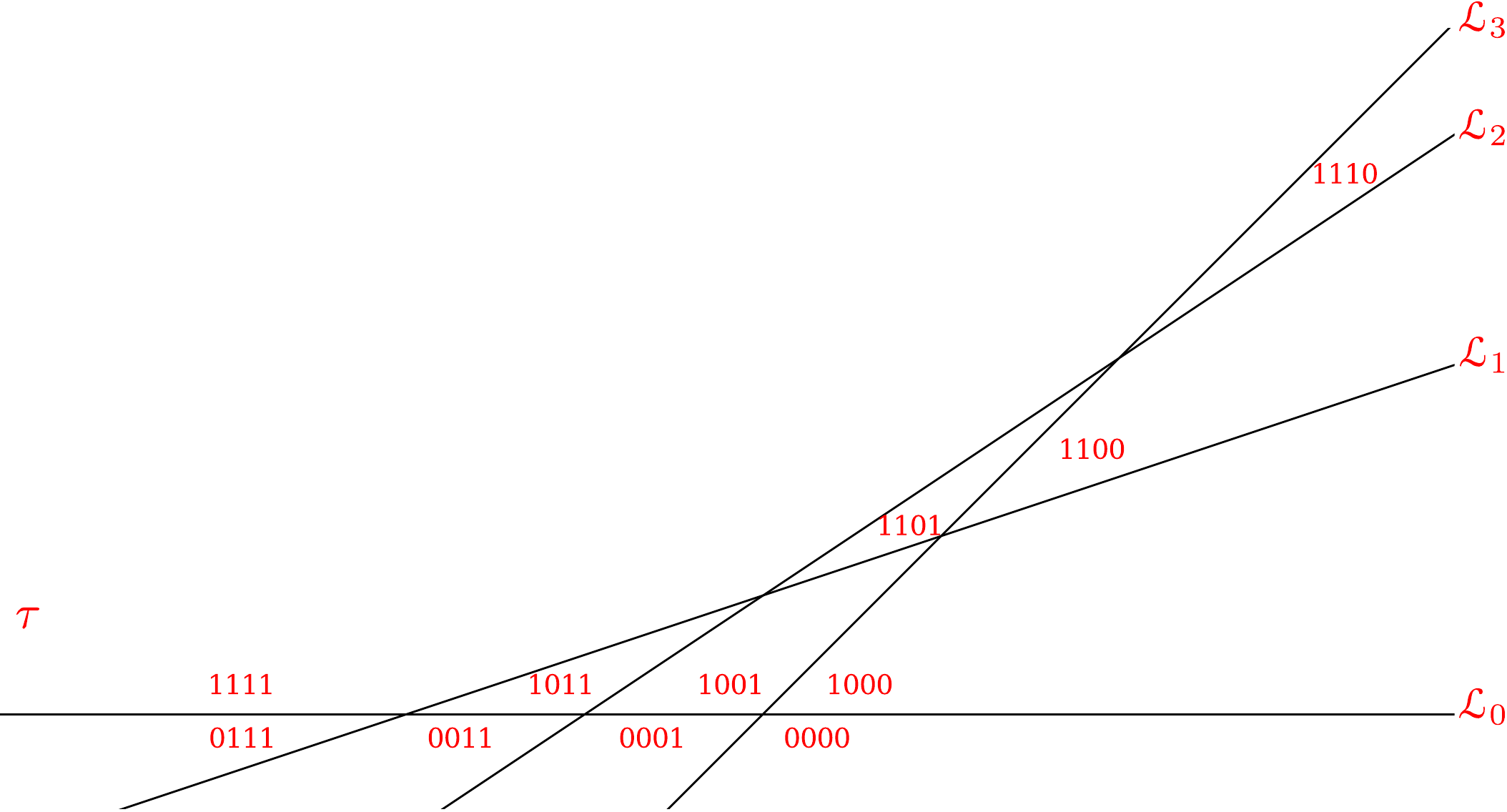}
\caption{The encodings of the $ 11 $ planar regions.}
\label{fig:P3}
\end{figure}

We see that
\[
\mathcal{P}_4=\{0000,1000,1100,1110,0001,1001,1101,0011,1011,0111,1111\}.
\]

\begin{remark}
The set $ \mathcal{P}_0 $ consists of the regions on the plane when there are no line, i.e., the only region is the plane itself. As a convention, we write $ \mathcal{P}_0=\{\varepsilon\}$ where $ \varepsilon $ denotes the empty word.
\end{remark}

Furthermore, we equip the set $ \mathcal{P}_n $ with the usual words concatenation. The encoding arguments in this paper make extensive use of the following notation. 

\begin{notation}
Let $ \sigma \in\{0,1\} $, and let $ k$, $\ell $ be integers. Also, we let $ w $ denote a binary words defined other the alphabet $ \{0,1\} $. The concatenating operation verifies
\begin{itemize}
\item $\sigma^k= \underbrace{\sigma\sigma\cdots\sigma\sigma\sigma}_{\text{$ k $ times}} $;

\item $ \sigma^k\sigma^\ell=\sigma^{k+\ell} $;

\item $ \sigma^k=\varepsilon\mbox{ if }k\leq0$;

\item $ \varepsilon^k=\varepsilon $;

\item $ w\varepsilon=\varepsilon w =w$.
\end{itemize}
\end{notation}

We now suppose that each region is already associated with the previously defined binary words. We have the following proposition.

\begin{proposition}
Let $ n-1 $ lines be arranged on the plane. By adding a $ n $-th line, we add $ n $ new regions that are encoded by 
\[
\pi_k=1^k0^{n-k},\ k=0,1,\ldots, n-1. 
\]
The remaining regions are simply the previous ones whose codes are rewritten by appending the $ 1 $-digit at the right end.
\end{proposition}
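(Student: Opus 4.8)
The plan is to add the new line $\mathcal{L}_{n-1}$ to the arrangement of $\mathcal{L}_0,\mathcal{L}_1,\ldots,\mathcal{L}_{n-2}$ and to read off the codes by splitting the analysis according to the two half-planes $\mathcal{H}_{n-1}^+$ and $\mathcal{H}_{n-1}^-$ of the new line. This is natural because the new letter $\sigma_{n-1}$ records precisely on which side of $\mathcal{L}_{n-1}$ a region lies: it equals $1$ above the line and $0$ below it. First I would record the global picture. Since the family is in general arrangement, $\mathcal{L}_{n-1}$ meets each of the $n-1$ existing lines in a single point; these $n-1$ distinct points cut $\mathcal{L}_{n-1}$ into $n$ consecutive pieces, and each piece lies in one old region and splits it in two. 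This already produces $n$ new regions, matching the recurrence \eqref{eq:line}. The decisive structural input is Proposition \ref{prop:upperintersection}: taking $r=n-1$, every intersection point of two old lines $\mathcal{L}_p,\mathcal{L}_q$ with $p<q<n-1$ lies in $\mathcal{H}_{n-1}^+$, so \emph{no} vertex of the old arrangement lies below $\mathcal{L}_{n-1}$.

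Next I would describe $\mathcal{H}_{n-1}^-$. Because it contains no old vertex, the traces of $\mathcal{L}_0,\ldots,\mathcal{L}_{n-2}$ inside $\mathcal{H}_{n-1}^-$ never cross one another; by the ordering $l_p(x)<l_q(x)$ for $p<q$ established in the proof of Proposition \ref{prop:arrangement}, they appear as $n-1$ pairwise non-crossing curves stacked in increasing index order. They therefore partition $\mathcal{H}_{n-1}^-$ into exactly $n$ strips. A point of the $k$-th strip lies above $\mathcal{L}_0,\ldots,\mathcal{L}_{k-1}$ and below $\mathcal{L}_k,\ldots,\mathcal{L}_{n-2}$, so that
\[
\sigma_0=\cdots=\sigma_{k-1}=1,\quad \sigma_k=\cdots=\sigma_{n-2}=0,\quad \sigma_{n-1}=0,
\]
whence its code is $1^{k}0^{\,n-k}=\pi_k$ for $k=0,1,\ldots,n-1$. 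This identifies the $n$ newly created regions with $\pi_0,\pi_1,\ldots,\pi_{n-1}$.

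Finally I would treat $\mathcal{H}_{n-1}^+$. Each old region is an intersection of half-planes, hence convex, so its intersection with the half-plane $\mathcal{H}_{n-1}^+$ is convex and in particular connected, i.e.\ it stays a single region. The regions meeting $\mathcal{H}_{n-1}^-$ are exactly the $n$ regions crossed by $\mathcal{L}_{n-1}$ (these give the $n$ strips above), and such crossed regions also meet $\mathcal{H}_{n-1}^+$; every other old region lies entirely in $\mathcal{H}_{n-1}^+$, since all its vertices do. Thus every old region survives with a nonempty part in $\mathcal{H}_{n-1}^+$, keeping its first $n-1$ letters unchanged and gaining $\sigma_{n-1}=1$. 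This is exactly the claim that the remaining regions are the previous ones with a $1$ appended. A final count $n+|\mathcal{P}_{n-1}|=n+P_{n-1}=P_n$ then confirms that the two families—the words $\pi_k$ and the words ending in $1$—together exhaust $\mathcal{P}_n$.

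I expect the main obstacle to be the rigorous justification in the last paragraph that the correspondence in $\mathcal{H}_{n-1}^+$ is an honest bijection, namely that every old region genuinely survives with a nonempty, connected part above the new line. The cleanest route is to combine convexity of the old regions with the fact, furnished by Proposition \ref{prop:upperintersection}, that all old vertices lie in $\mathcal{H}_{n-1}^+$; this prevents any old region from being swallowed entirely into $\mathcal{H}_{n-1}^-$ and forces the intersection with $\mathcal{H}_{n-1}^+$ to remain a single region.
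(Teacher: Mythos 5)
Your proof is correct and takes essentially the same route as the paper's: both decompose the plane along the new line, use Proposition \ref{prop:arrangement} to identify the $n$ strips of the new lower half-plane and read off the codes $1^k0^{n-k}$, and use Proposition \ref{prop:upperintersection} to conclude that the old regions reappear above the line with a $1$ appended. If anything, you are more careful than the paper, whose proof simply asserts that ``all the old regions are located at the upper half-plane''; your convexity and counting arguments supply the justification---that every old region keeps a nonempty, connected piece in $\mathcal{H}_{n-1}^+$---which the paper leaves implicit.
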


\begin{proof}
The new regions encoding is an application of \hyperref[prop:arrangement]{Proposition \ref*{prop:arrangement}}. The lower half-plane $ \mathcal{H}_{n}^- $ is divided in a way that the lines are arranged in ascending order. More precisely, these regions are
\[
\bigcap\limits_{i=0}^{n-1}\mathcal{H}_{i}^- \ \mbox{and}\ \left(\bigcap\limits_{i=0}^{j}\mathcal{H}_{i}^+\right)\cap\left(\bigcap\limits_{i=j+1}^{n-1}\mathcal{H}_{i}^- \right),\ j=0,1,\ldots,n-1.
\]
Therefore, we have the correspondence
	
\begin{align*}
\bigcap\limits_{i=0}^{n-1}\mathcal{H}_{i}^- \Leftrightarrow& 0^{n}\\
\mathcal{H}_{0}^+\cap\left(\bigcap\limits_{i=1}^{n-1}\mathcal{H}_{i}^- \right)\Leftrightarrow &10^{n-1}\\
\mathcal{H}_{0}^+\cap\mathcal{H}_{1}^+\cap\left(\bigcap\limits_{i=2}^{n-1}\mathcal{H}_{i}^- \right)\Leftrightarrow &1^20^{n-2}\\
\vdots&\\
\mathcal{H}_{0}^+\cap\mathcal{H}_{1}^+\cap\cdots\cap\mathcal{H}_{n-3}^+\cap\mathcal{H}_{n-2}^-\cap\mathcal{H}_{n-1}^-\Leftrightarrow& 1^{n-2}0^{2}\\
\mathcal{H}_{0}^+\cap\mathcal{H}_{1}^+\cap\cdots\cap\mathcal{H}_{n-3}^+\cap\mathcal{H}_{n-2}^+\cap\mathcal{H}_{n-1}^-\Leftrightarrow& 1^{n-1}0.
\end{align*}
Now according to \hyperref[prop:upperintersection]{Proposition \ref*{prop:upperintersection}}, all the old regions are located at the upper half-plane $ \mathcal{H}_{n-1}^+ $. Hence, the corresponding encoding are written as $ \pi1 $ for any $ \pi\in\mathcal{P}_{n-1} $.
\end{proof}

\begin{remark}
We let $ \mathcal{R}_n $ denote the set $ \left\{1^k0^{n-k}\mid 0\leq k\leq n-1 \right\} $ which represents the $ n $ new regions. 
We may write
\begin{equation}\label{eq:recP}
\mathcal{P}_0=\{\varepsilon\},\ \mathcal{P}_n=\mathcal{P}_{n-1}1\cup\mathcal{R}_n\ \mbox{where}\ n\geq1.
\end{equation}

Since $ \# \mathcal{P}_0=1$, then we recognize the lazy caterer's sequence recurrence
\begin{align*}
\#\mathcal{P}_n&=\#\mathcal{P}_{n-1}+n\\
&=\dfrac{n(n+1)+2}{2}\cdot
\end{align*}

Notice that the relation \eqref{eq:recP} allows us to recursively compute $ \mathcal{P}_{n-1} $ from $ \mathcal{P}_{n} $. It suffices to extract the words of $ \mathcal{P}_{n} $ which end with the $ 1 $-digit and remove this digit so that the length of the word become $ n-1 $. For example, using the result in \hyperref[fig:P3]{Figure \ref*{fig:P3}}, we have
\begin{align*}
\mathcal{P}_4&=\{0000,1000,1100,1110,0001,1001,1101,0011,1011,0111,1111\};\\
\mathcal{P}_3&=\{000,100,110,001,101,011,111\};\\
\mathcal{P}_2&=\{00,10,01,11\};\\
\mathcal{P}_1&=\{0,1\};\\
\mathcal{P}_0&=\{\varepsilon\},\ \mbox{we use the property}\ 1=1\varepsilon.
\end{align*}
\end{remark}

We can now write the encodings of the planar regions defined by $ n $ straight lines.

\begin{corollary}
Let $ n\geq 1 $. The encodings of the plane defined by $ n $ lines is given by
\begin{equation}\label{eq:plancode}
\mathcal{P}_n:=\left\{1^k0^{n-p-k}1^p\mid 0\leq p\leq n-1\ and\ 0\leq k\leq n-p-1\right\}\cup\left\{1^n\right\}.
\end{equation}
\end{corollary}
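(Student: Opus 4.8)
The plan is to prove the closed form \eqref{eq:plancode} by induction on $n$, using the recursive description $\mathcal{P}_n=\mathcal{P}_{n-1}1\cup\mathcal{R}_n$ from \eqref{eq:recP}, where $\mathcal{R}_n=\{1^k0^{n-k}\mid 0\leq k\leq n-1\}$. The base case $n=1$ is a direct check: the generic part of the formula produces only the word $0$ (from $p=0$, $k=0$), which together with $\{1^1\}$ gives exactly $\mathcal{P}_1=\{0,1\}$. One can even note that the formula reduces correctly at $n=0$, where the generic index set $0\leq p\leq -1$ is empty and $1^0=\varepsilon$, recovering the convention $\mathcal{P}_0=\{\varepsilon\}$.

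For the inductive step, assume the formula holds for $\mathcal{P}_{n-1}$. First I would compute $\mathcal{P}_{n-1}1$ by appending a $1$ to every word. The all-ones word $1^{n-1}$ becomes $1^n$, while a generic word $1^k0^{(n-1)-p-k}1^p$ becomes $1^k0^{(n-1)-p-k}1^{p+1}$. Setting $p'=p+1$ rewrites this as $1^k0^{n-p'-k}1^{p'}$, and the ranges $0\leq p\leq n-2$, $0\leq k\leq (n-1)-p-1$ translate into $1\leq p'\leq n-1$, $0\leq k\leq n-p'-1$. Hence $\mathcal{P}_{n-1}1$ accounts for all words of the target set with $p\geq 1$, together with $1^n$.

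Next I would observe that $\mathcal{R}_n$ supplies precisely the $p=0$ slice of the target set: indeed $1^k0^{n-k}=1^k0^{n-0-k}1^0$ with $0\leq k\leq n-1=n-0-1$. Taking the union of $\mathcal{P}_{n-1}1$ and $\mathcal{R}_n$ therefore merges the $p\geq 1$ words with the $p=0$ words and retains $1^n$, which is exactly \eqref{eq:plancode}.

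I do not anticipate a serious obstacle; the only point requiring care is the index shift $p\mapsto p+1$ together with the matching rewriting of the zero-block exponent, $(n-1)-p-k=n-p'-k$, which must be tracked so that the total length stays equal to $n$. As a consistency check I would note that the two pieces of the union are disjoint, since every word in $\mathcal{P}_{n-1}1$ ends in $1$ whereas every word in $\mathcal{R}_n$ has at least one trailing $0$ (because $n-k\geq 1$). This confirms $\#\mathcal{P}_n=\#\mathcal{P}_{n-1}+n$ and recovers the lazy caterer's count.
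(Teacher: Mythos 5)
Your proof is correct and takes essentially the same route as the paper's: induction on $n$ using the recursion $\mathcal{P}_n=\mathcal{P}_{n-1}1\cup\mathcal{R}_n$, with the same reindexing $p\mapsto p+1$ to absorb the appended $1$-digit and the identification of $\mathcal{R}_n$ as the $p=0$ slice. Your closing disjointness and cardinality check is a pleasant extra, but not required by the argument.
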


\begin{proof}
Let us prove by induction that for all positive integer $ n $, the identity (\ref{eq:plancode}) holds. 

When $ n=1 $, we have $ 0\leq p\leq 0 $ and $ 0\leq k\leq 0 $ so that the only candidate is the word $ 0 $. So we write
\[
\mathcal{P}_1=\{0,1\}
\]
which coincides with the disjoint half-planes characterized by the line $ \mathcal{L}_0:y=0 $.

Assume that the formula \eqref{eq:plancode} is verified for $ n=\ell $, and let us show that it still holds for $n=\ell+1 $.
From the induction hypothesis we have $$ \mathcal{P}_\ell=\left\{1^{k}0^{\ell-p-k}1^p\mid0\leq p\leq \ell-1\ and\ 0\leq k\leq \ell-p-1\right\}\cup\left\{1^\ell\right\}.$$By adding a $( \ell+1) $-th lines, we obtain $ \ell+1 $ new regions, namely $$ \mathcal{R}_{\ell+1}=\left\{1^k0^{\ell+1-k}\mid{0\leq k\leq \ell}\right\}, $$ and we rewrite the old regions as
\begin{align*}
\mathcal{P}_\ell1&=\left\{1^{k}0^{\ell-p-k}1^p1\mid0\leq p\leq \ell-1\ and\ 0\leq k\leq \ell-p-1\right\}\cup1\left\{1^\ell\right\}\\
&=\left\{1^{{k}}0^{\ell-\widehat{p}+1-k}1^{\widehat{p}}\mid 1\leq \widehat{p}\leq \ell\ and\ 0\leq {k}\leq \ell-\widehat{p}\right\}\cup\left\{1^{\ell+1}\right\}.
\end{align*}
Therefore 
\begin{align*}
\mathcal{P}_{\ell+1}&=\mathcal{P}_{\ell}1\cup\left\{1^k0^{\ell+1-k}\mid{0\leq k\leq \ell}\right\}\\
&=\left\{1^{{k}}0^{\ell+1-\widehat{p}-k}1^{\widehat{p}}\mid 0\leq \widehat{p}\leq \ell\ and\ 0\leq {k}\leq \ell-\widehat{p}\right\}\cup\left\{1^{\ell+1}\right\}.
\end{align*}
\end{proof}

Notice that we can actually rewrite the formula \eqref{eq:recP} as belows in order to prove the induction:
\begin{equation}\label{eq:generatorP}
\mathcal{P}_n=\left\{1^{n}\right\}\cup\left(\bigcup\limits_{p=0}^{n-1}\mathcal{R}_{n-p}1^{p}\right),\ n\geq 1.
\end{equation}

\begin{remark}
For a finite arbitrary family of lines in general arrangement, say of cardinality $ n $, we can always label each region in a unique way with words from $ \mathcal{P}_n $. Therefore, we shall abusively refer to the set $ \mathcal{P}_n $ as a partition of the plane.
\end{remark}

\section{The states of the foil family}\label{sec:states}

Similarly, we may write a sequence of binary digits that relates the sequence of splits we applied to a diagram. Here the binary digit is referring to the type of split that is applied at each crossing. We first assign an orientation and label the crossings from $ 1 $ to $ n $. If we apply an $ A $-split at the crossing number $ i $, then we write the $ 0 $-digit. If it is a $ B $-split, then we write the $ 1 $-digit. 

Let $ \mathsf{F}_n$ and $ \mathsf{T}_n$ respectively denote the set words associated with the states of the $ n $-foil and the $ n $-twist loop. From the relation \eqref{eq:foiltwist}, we have
\begin{equation}\label{eq:statesFT}
\mathsf{F}_n=0\mathsf{T}_{n-1}\cup1\mathsf{F}_{n-1}.
\end{equation}

As an illustration, let us write the set of words associated with the states of the \textit{quatrefoil} $ F_4 $ as pictured in \hyperref[fig:states4foil]{Figure \ref*{fig:states4foil}}. We apply the previously mentioned rule by splitting the crossing number $ 1 $, and then computing the states of the resulting planar isotopic $ 3 $-twist loop and $ 3 $-foil. The set $ \mathsf{F}_4$ is therefore given by
\begin{align*}
\mathsf{F}_4=\{&0000,0100,0001,0101,0010,0110,0011,0111,\\
&1000,1001,1010,1011,1100,1101,1110,1111\}.
\end{align*}

\begin{figure}
\centering
\includegraphics[width=0.8\linewidth]{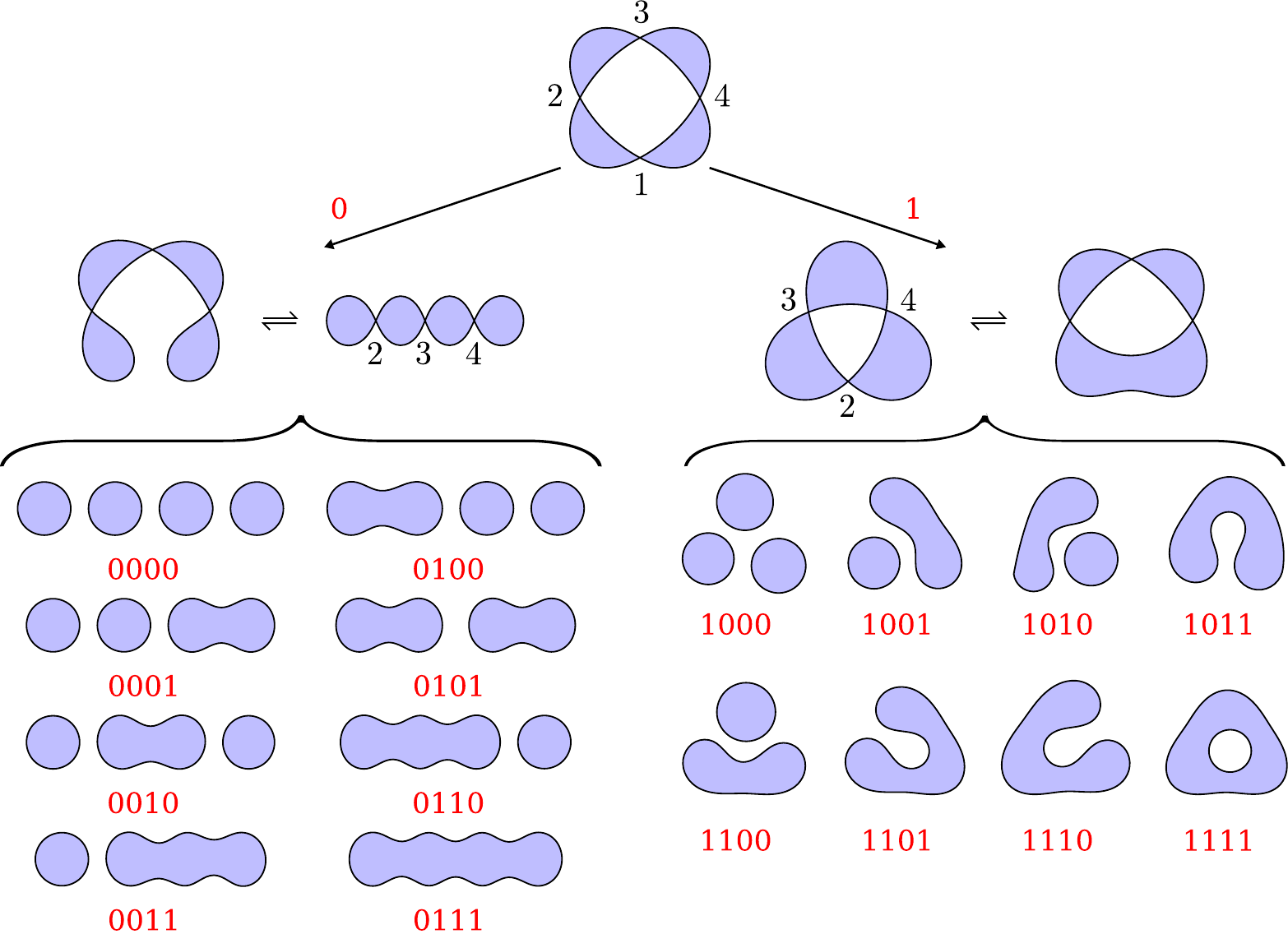}
\caption{The states of the quatrefoil.}
\label{fig:states4foil}
\end{figure}

We now let $ \mathcal{F}_n $ denote a subset of $ \mathsf{F}_n $ such that the elements of $ \mathcal{F}_n $ describe the sequences of splits that lead to a $ 2$-state. For the case of the previous quatrefoil, we have 
\begin{equation*}
\mathcal{F}_4=\left\{0101,0110,0011,1001,1010,1100,1111\right\}.
\end{equation*}
Generally, to end up with two Jordan curves, there exists two ways:
\begin{itemize}
\item either we choose two crossings at which we apply an $ A $-split, and then apply a $ B $-split at every remaining crossing;

\item or we simply apply a $ B$-split at every crossing.
\end{itemize}
The number of possibilities is therefore $\binom{n}{2} $ for the former and $ 1 $ for the later. 

From this point on, we may identify a state by its sequence of splits. Let us first establish the set of words for the $ n $-twist loop.

\begin{proposition}
The set of $ 2 $-states of the $ n $-twist loop is
\begin{equation*}
\mathcal{T}_n:=\left\{1^k01^{n-k-1}\mid 0\leq k\leq n-1\right\},\ n\geq2.
\end{equation*}
\end{proposition}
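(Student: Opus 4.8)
The plan is to proceed by induction on $n$, exploiting the same leftmost-crossing decomposition that produced the recurrence $T_n(x)=xT_{n-1}(x)+T_{n-1}(x)$. The key observation, already recorded when that recurrence was derived, is that splitting the leftmost crossing sets up a bijection between the states of $T_n$ and the pairs (first letter $\sigma_1$, state of $T_{n-1}$): writing a state word as $\sigma_1 w'$ with $w'$ a state of $T_{n-1}$, an $A$-split ($\sigma_1=0$) produces a disjoint union of $T_{n-1}$ with an unknot and therefore raises the component count by one, whereas a $B$-split ($\sigma_1=1$) leaves the component count of $w'$ untouched. Consequently a $2$-state of $T_n$ is either $0w'$ with $w'$ a $1$-state of $T_{n-1}$, or $1w'$ with $w'$ a $2$-state of $T_{n-1}$.

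First I would dispose of the $1$-state factor: since $t_{m,1}=\binom{m}{0}=1$, the unique $1$-state of $T_{n-1}$ is the full $B$-split $1^{n-1}$, so the first family contributes only the single word $0\cdot 1^{n-1}=1^0 0 1^{n-1}$, i.e.\ the $k=0$ term. For the second family I invoke the induction hypothesis $\mathcal{T}_{n-1}=\{1^j 0 1^{n-2-j}\mid 0\leq j\leq n-2\}$ and prepend a $1$, which sends $1^j 0 1^{n-2-j}$ to $1^{j+1}0 1^{n-1-(j+1)}$ and hence sweeps out exactly the terms $k=1,\dots,n-1$. Taking the union yields $\mathcal{T}_n=\{1^k 0 1^{n-k-1}\mid 0\leq k\leq n-1\}$, as claimed. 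The base case $n=2$ is immediate, since the two $2$-states $10$ and $01$ are precisely $1^1 0 1^0$ and $1^0 0 1^1$.

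As a cross-check I would note that the statement also follows directly from the enumerative fact established earlier in the section: a $k$-state of the twist loop arises by choosing the $k-1$ crossings that carry an $A$-split and filling the remaining crossings with $B$-splits. For $k=2$ this amounts to placing exactly one $0$ among $n$ letters, and the binary words of length $n$ with a single $0$ are precisely the strings $1^k 0 1^{n-k-1}$ as $k$ ranges over $0,\dots,n-1$; this simultaneously confirms the count $\#\mathcal{T}_n=n=\binom{n}{1}=t_{n,2}$. There is no serious obstacle in this argument; the only points demanding care are the bookkeeping of the crossing labelling (so that the ``leftmost'' crossing really corresponds to the first letter) and the correct application of the $+1$ jump in component count for the $A$-split, together with the uniqueness of the $1$-state of $T_{n-1}$, which is exactly what collapses the first family down to the lone word $1^0 0 1^{n-1}$.
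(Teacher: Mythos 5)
Your proof is correct, but your primary argument takes a different route from the paper's, and it is worth noting that your ``cross-check'' paragraph is in fact the paper's entire proof. The paper argues directly: the coefficient of $x^2$ in $T_n(x)=x(x+1)^n$ is $\binom{n}{1}$, and, by the structural discussion from Section~2 (a $k$-state of the twist loop arises exactly by choosing $k-1$ crossings for $A$-splits and applying $B$-splits elsewhere), the $2$-states are exactly the split sequences with a single $A$-split, i.e.\ the length-$n$ words with a single $0$, which are $1^k01^{n-k-1}$ for $0\leq k\leq n-1$. Your main argument instead runs an induction on $n$ through the leftmost-crossing decomposition: a $2$-state of $T_n$ is either $0w'$ with $w'$ the unique $1$-state $1^{n-1}$ of $T_{n-1}$, or $1w'$ with $w'$ a $2$-state of $T_{n-1}$, and the base case $n=2$ is checked by hand. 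This is genuinely different from the paper's proof of this proposition, and it mirrors instead the mechanism the paper uses later for the foil, namely $\mathcal{F}_{\ell+1}=1\mathcal{F}_{\ell}\cup 0\mathcal{T}_{\ell}$, so your treatment makes the twist-loop and foil cases stylistically parallel. The trade-off: the paper's direct argument is shorter and identifies all $k$-states at once, but it leans on the informal claim that the binomial coefficient count corresponds to exactly those split sequences; your induction is more self-contained, requiring only the split recurrence already used to derive $T_n(x)=xT_{n-1}(x)+T_{n-1}(x)$, the uniqueness of the $1$-state $1^{n-1}$, and careful bookkeeping of the first letter, all of which you handle correctly.
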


\begin{proof}
Firstly, recall that the generating polynomial of the $ n$-twist loop is expressed by $ T_n(x)=x(x+1)^n $. The coefficient of the term in $ x^2 $ is given by $\binom{n}{1} $. This means that in order to obtain two components, we have to choose one crossing and apply an $ A$-split. Then we apply a $ B $-split at each remaining crossing. Thus, the corresponding words are of the form $$ \omega_k=1^k01^{n-k-1},\ k=0,\ldots, n-1.$$
\end{proof}

\begin{remark}
As a convention, we write $ \mathcal{T}_0=\varnothing $ which we explain by the fact that not only the knot $ T_0 $ has no crossings, but also that the state of the knot $ T_0 $, actually itself, is excluded from the class of the $ 2 $-state. Besides, we have $\mathcal{T}_1=\{1\} $ since it requires one, and only one, split of type $ A $ to obtain a $ 2 $-state from the $ 1 $-twist loop.
\end{remark}

\begin{corollary}
For $ n\geq2 $, the set of the $ 2 $-states of the $ n $-foil is given by
\begin{equation}\label{eq:foilset}
\mathcal{F}_{n}:=\left\{1^p01^k01^{n-p-k-2}\mid 0\leq p\leq n-2\ and\ 0\leq k\leq n-p-2\right\}\cup \left\{1^{n}\right\}.
\end{equation}
\end{corollary}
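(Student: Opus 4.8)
The plan is to prove \eqref{eq:foilset} by induction on $n$, driven by the structural recurrence \eqref{eq:statesFT} restricted to the subclass of $2$-state words. The first step is to observe that \eqref{eq:statesFT} descends to these subclasses, namely that
\[
\mathcal{F}_n=0\mathcal{T}_{n-1}\cup1\mathcal{F}_{n-1},\qquad n\geq2.
\]
Indeed, prepending a $0$ corresponds to an $A$-split at the first crossing, which by the proposition behind \eqref{eq:foiltwist} turns $F_n$ into an $(n-1)$-twist loop, while prepending a $1$ is a $B$-split, which turns $F_n$ into an $(n-1)$-foil. In either case the remaining splits produce exactly the same number of Jordan curves in the reduced diagram as they do in $F_n$, so a word $0u$ (respectively $1u$) is a $2$-state word for $F_n$ precisely when $u$ is a $2$-state word for $T_{n-1}$ (respectively $F_{n-1}$). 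This identification of component counts across the reduction is the one place where I would appeal to the geometric content established earlier, and conceptually it is the heart of the argument.

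For the base case $n=2$, the constraints $0\leq p\leq0$ and $0\leq k\leq 0$ force $p=k=0$, so the right-hand side of \eqref{eq:foilset} is $\{00\}\cup\{11\}$; this matches the two $2$-states of $F_2$ recorded by the coefficient $f_{2,2}=2$ in $F_2(x)=2x^2+2x$.

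For the inductive step I would assume \eqref{eq:foilset} for $n=m$ and evaluate $\mathcal{F}_{m+1}=0\mathcal{T}_m\cup1\mathcal{F}_m$ one piece at a time. Using $\mathcal{T}_m=\{1^k01^{m-k-1}\mid0\leq k\leq m-1\}$, the first piece is
\[
0\mathcal{T}_m=\left\{1^{0}01^{k}01^{(m+1)-k-2}\mid0\leq k\leq m-1\right\},
\]
which is exactly the $p=0$ slice of the claimed set at $n=m+1$. Substituting the inductive hypothesis into the second piece and prepending a $1$ lengthens the leading block $1^p$ to $1^{p+1}$; after the reindexing $\widehat{p}=p+1$ this exhibits $1\mathcal{F}_m$ as the $\widehat{p}\geq1$ slice of the claimed set together with the single word $1\cdot1^m=1^{m+1}$. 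Taking the union of the two pieces then recovers \eqref{eq:foilset} at $n=m+1$, with $p$ now ranging over $0\leq p\leq m-1$.

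I expect the main obstacle to be purely bookkeeping: matching the exponent $(m+1)-p-k-2$ and the index ranges so that the $p=0$ contribution coming from $0\mathcal{T}_m$ and the $p\geq1$ contribution coming from $1\mathcal{F}_m$ dovetail without gaps or double counting, and so that the exceptional word $1^{m+1}$ is generated exactly once. A useful sanity check alongside the induction is the cardinality count $\#\mathcal{F}_n=\binom{n}{2}+1=f_{n,2}$: the words with two $0$'s are indexed by the unordered pair of crossings receiving the $A$-splits, and $1^n$ accounts for the extra $+1$, which also confirms that the claimed set carries no hidden repetitions.
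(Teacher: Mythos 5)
Your proposal is correct and follows essentially the same route as the paper: induction on $n$ with base case $\mathcal{F}_2=\{00,11\}$, using the recurrence $\mathcal{F}_{n}=0\mathcal{T}_{n-1}\cup1\mathcal{F}_{n-1}$ coming from \eqref{eq:foiltwist} and \eqref{eq:statesFT}, identifying $0\mathcal{T}_m$ as the $p=0$ slice and $1\mathcal{F}_m$ (after the reindexing $\widehat{p}=p+1$) as the $p\geq1$ slices together with $1^{m+1}$. Your added justification that the recurrence descends to the $2$-state subclass, and the cardinality check $\#\mathcal{F}_n=\binom{n}{2}+1$, are sound refinements of details the paper leaves implicit.
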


\begin{proof}
We use induction to prove the identity \eqref{eq:foilset}. When $ n=2 $, the boundaries $ 0\leq p\leq 2-2 $ and $ 0\leq k\leq 2-p-2 $ imply 
\[
\mathcal{F}_2=\{00,11\}.
\]
From \hyperref[sub:t22]{Figure \ref*{sub:t22}}, we can see that a $2 $-foil ends up to $ 2 $-states if and only if we apply two successive $A$-splits or two successive $ B $-splits.

Let $ \ell\in\mathbb{N} $ be given, and assume that the identity \eqref{eq:foilset} is true for $ n = \ell $. Using the relation \eqref{eq:foiltwist} and \eqref{eq:statesFT}, we write
\begin{equation}\label{eq:foiltwistset}
\mathcal{F}_{\ell+1}=1\mathcal{F}_{\ell}\cup0\mathcal{T}_\ell.
\end{equation}
By the induction hypothesis, we have
\begin{align}
0\mathcal{T}_\ell:=\left\{01^k01^{\ell-k-1}\mid0\leq k\leq \ell-1\right\}\label{eq:0Cl}
\end{align}
and 
\begin{align}
1\mathcal{F}_{\ell}&:=\left\{1^{p+1}01^k01^{\ell-p-k-2}\mid0\leq p\leq \ell-2\ and\ \leq k\leq\ell-p-2\right\}\cup \left\{11^{\ell}\right\}\nonumber\\
&=\left\{1^{\widehat{p}}01^k01^{\ell-\widehat{p}-k-1}\mid 1\leq \widehat{p}\leq \ell-1\ and\ 0\leq k\leq\ell-\widehat{p}-1\right\}\cup \left\{1^{\ell+1}\right\}\label{eq:1Tl}.
\end{align}
Combining \eqref{eq:0Cl} and \eqref{eq:1Tl} we get
\begin{align*}
\mathcal{F}_{\ell+1}&=\left\{1^{\widehat{p}}01^k01^{\ell-\widehat{p}-k-1}\mid0\leq \widehat{p}\leq \ell-1\ and\ 0\leq k\leq\ell-\widehat{p}-1\right\}\cup \left\{1^{\ell+1}\right\}.
\end{align*}
\end{proof}

\begin{remark} 
It is worth mentioning following observations.

\begin{enumerate}
\item The $ 0 $-foil is already made out of two components and we write $\mathcal{F}_0= \{\varepsilon\}$.

\item For the case $ n=1 $, recall that the only split that produces two components is the one that opens the $ B $ channel. Therefore, we write $\mathcal{F}_1= \{1\}$.

\item From the previous observation, the set $ \mathcal{F}_{n} $ can be defined by the following recurrence:	
\begin{equation}\label{eq:recfoil}
\mathcal{F}_0=\left\{\varepsilon\right\},\ \mathcal{F}_1=\left\{1\right\},\ \mathcal{F}_{n}=0\mathcal{T}_{n-1}\cup1\mathcal{F}_{n-1}\mbox{ for }n\geq2.
\end{equation} 
It allows us to write the $ 2 $-states set of a $ (n-1) $-foil when the set $ \mathcal{F}_{n} $ is already given. A $ n-1 $ length words $ \omega $ is an element of $ \mathcal{F}_{n-1} $ if and only if $ 1\omega $ is an element of $ \mathcal{F}_{n} $. For example, we can recursively deduce the set $ \mathcal{F}_{1} $, $ \mathcal{F}_{2} $ and $ \mathcal{F}_{3} $ from $ \mathcal{F}_{4} $. We have
\begin{align*}
\mathcal{F}_4&=\left\{0101,0110,0011,1001,1010,1100,1111\right\};\\
\mathcal{F}_{3} &=\{001,010,100,111\};\\
\mathcal{F}_{2} &=\{00,11\};\\
\mathcal{F}_{1} &=\{1\}.
\end{align*}
Using the property $ 1\varepsilon=1 $ we justify the convention $ \mathcal{F}_{0}=\{\varepsilon\} $.

\item We can then calculate the cardinal of the set $ \mathcal{F}_{n} $ as belows:
\begin{align*}
\#\mathcal{F}_{n}&=\#0\mathcal{T}_{n-1}+\#1\mathcal{F}_{n-1}\\
&=\#1\mathcal{F}_{n-1}+n-1\\
&=\dfrac{n(n-1)+2}{2}
\end{align*}
which gives the lazy caterer's sequence when $n\geq 1 $.

\item Let $ n\geq 1 $. We can combine \eqref{eq:foilset} and \eqref{eq:recfoil} as belows:
\begin{equation}\label{eq:generatorF}
\mathcal{F}_{n+1}=\left\{1^{n+1}\right\}\cup\left(\bigcup\limits_{p=0}^{n-1}1^{p}0\mathcal{T}_{n-p}\right).
\end{equation}
\end{enumerate}	
\end{remark}

As we see, not only we find the lazy caterer's sequence but also the encoding of the $ 2 $-states. The latter, apparently share similarities with the encoding of the planar regions defined by lines in general arrangement. The following section aims at establishing a connexion between them.

\section{Constructing the bijection}\label{sec:bijection}
To begin with, let us focus on the following recurrence definition:
\begin{equation*}
\mathcal{P}_0=\left\{\varepsilon\right\},\ \mathcal{P}_n=\left\{1^{n}\right\}\cup\left(\bigcup\limits_{p=0}^{n-1}\mathcal{R}_{n-p}1^{p}\right)\mbox{ for }n\geq1,
\end{equation*}
and
\begin{equation*}
\mathcal{F}_1=\left\{1\right\},\ \mathcal{F}_{n+1}=\left\{1^{n+1}\right\}\cup\left(\bigcup\limits_{p=0}^{n-1}1^{p}0\mathcal{T}_{n-p}\right)\mbox{ for }n\geq1.
\end{equation*}

Intuitively, both of these identities suggest that an initial step for constructing a bijection is to isolate a sub-word from $ \mathcal{R}_{n-\widetilde{p}} $ and $\mathcal{T}_{n-\widetilde{p}}$ for a certain nonnegative integer $ \widetilde{p} $ verifying $ 0\leq \widetilde{p}\leq n-1 $. 

If $ n=0 $, then the map $\phi:\mathcal{P}_0\longrightarrow\mathcal{F}_1$ defined by $ \phi(\varepsilon)=1 $ is obviously a bijection. 

If $ n\geq 1 $, then we need the following construction.

\begin{lemma}\label{lemma:bij}	
Let $ \ell $, $ r $, $ s $ be three nonnegative integers such that $ 0\leq r\leq \ell -1$ and $ 0\leq s\leq \ell -1$, and let
\begin{itemize}
\item 	$ \pi_{\ell,r} =1^r0^{\ell-r}\in\mathcal{R}_\ell$,

\item $ \omega_{\ell,s} =1^s01^{\ell-s-1}\in\mathcal{T}_{\ell}$
\end{itemize}
respectively denote the element of $\mathcal{R}_\ell$ and $\mathcal{T}_{\ell}$.

We define the following map:
\begin{align*}
\phi_\ell:\left\{1^k0^{\ell-k}\ | \ 0\leq k\leq \ell-1\right\}&\longrightarrow \left\{1^k01^{\ell-k-1}\mid 0\leq k\leq \ell-1\right\}\\
\pi_{\ell,k}&\longmapsto\omega_{\ell,k}.
\end{align*}
Then, the map $ \phi_\ell$ is a bijection.
\end{lemma}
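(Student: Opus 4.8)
The plan is to recognize $\phi_\ell$ as a pure reindexing that preserves the common parameter $k$, and then to deduce bijectivity from the fact that both its source and its target are faithfully parametrized by $k\in\{0,1,\ldots,\ell-1\}$.

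First I would check that $\phi_\ell$ is well-defined: for each admissible index $k$ the image $\omega_{\ell,k}=1^k01^{\ell-k-1}$ is exactly the generic member of $\mathcal{T}_\ell$, so $\phi_\ell$ does land inside the prescribed codomain. Both the source word $\pi_{\ell,k}=1^k0^{\ell-k}$ and the target word $\omega_{\ell,k}$ have length $\ell$, which is a useful sanity check on the definition.

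The key structural step is to verify that the two parametrizations $k\mapsto\pi_{\ell,k}$ and $k\mapsto\omega_{\ell,k}$ are each injective on $\{0,\ldots,\ell-1\}$. For the source, the word $1^k0^{\ell-k}$ begins with exactly $k$ ones, so $k$ is read off from the length of its initial run of $1$'s; distinct indices therefore give distinct words. For the target, the word $1^k01^{\ell-k-1}$ contains a single $0$, occupying position $k+1$, so again $k$ is recovered from the word. Hence each indexing map is a bijection from $\{0,\ldots,\ell-1\}$ onto $\mathcal{R}_\ell$ and $\mathcal{T}_\ell$ respectively, and both sets have exactly $\ell$ elements.

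Finally, since $\phi_\ell$ carries $\pi_{\ell,k}$ to $\omega_{\ell,k}$ while leaving $k$ fixed, it is the composite of the inverse of the first parametrization with the second, hence a bijection. Equivalently, one may simply display the inverse map $\omega_{\ell,k}\mapsto\pi_{\ell,k}$ and confirm that the two compositions are the identity. I do not expect any genuine obstacle here: the argument is entirely bookkeeping, and the only point demanding care is the injectivity of the two parametrizations, which is immediate from the positional description of the single $0$ in the target and of the leading run of $1$'s in the source.
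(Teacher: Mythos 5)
Your proof is correct and follows essentially the same route as the paper: both treat $\phi_\ell$ as the identity on the index $k$ and deduce bijectivity from the fact that each of $\mathcal{R}_\ell$ and $\mathcal{T}_\ell$ is faithfully parametrized by $k\in\{0,\ldots,\ell-1\}$. You merely make explicit what the paper leaves implicit — that $k$ is recoverable from the leading run of $1$'s in the source word and from the position of the unique $0$ in the target word — which is a small improvement in rigor, not a different argument.
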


\begin{proof}
Both words $ \pi_{\ell,r} $ and $ \omega_{\ell,s}$ are of length $ \ell $ and the correspondence is one-to-one as we browse the index $ k $. Since $ 0\leq k\leq \ell-1 $ we construct the correspondence as follows:
\[
1^k0^{\ell-k}\longrightarrow1^k00^{\ell-k-1}\longrightarrow1^k01^{\ell-k-1}
\]
If $ k=\ell-1 $, then we map $ 1^{\ell-1}0 $ with $ 1^{\ell-1}00^0=1^{\ell-1}0 $.
\end{proof}

Let us now define a map from $ \mathcal{P}_n $ to $ \mathcal{F}_{n+1}$.

\begin{definition}
When $ n\geq 1 $, we let the map $ \phi $ be
\begin{align*}
\phi:\mathcal{P}_n&\longrightarrow \mathcal{F}_{n+1}\\
\pi&\longmapsto \omega= \begin{cases}
1^{n+1}, & \text{if $\pi=1^n$;}\\
1^p01^{k}01^{n-p-k-1}, & \text{if $ \pi= 1^{k}0^{n-p-k}1^p$.}
\end{cases}
\end{align*}
We associate a planar region with a $ 2 $-state according to the transformation below:
\begin{enumerate}
\item remove the last $ p $ $1$-digits;
\item change the last $ n-p-k-1 $ $ 0$-digits to $1 $;
\item concatenate the words $1^p0$ at the left-end.
\end{enumerate}
\end{definition}

\begin{proposition}
The map $ \phi $ is a bijection from $ \mathcal{P}_n $ to $ \mathcal{F}_{n+1}$.
\end{proposition}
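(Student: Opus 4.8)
The plan is to exploit the parallel recursive descriptions \eqref{eq:generatorP} and \eqref{eq:generatorF}, which present both sets as a singleton together with a union indexed by $p$:
\[
\mathcal{P}_n=\{1^n\}\cup\bigcup_{p=0}^{n-1}\mathcal{R}_{n-p}1^p, \qquad \mathcal{F}_{n+1}=\{1^{n+1}\}\cup\bigcup_{p=0}^{n-1}1^p0\,\mathcal{T}_{n-p}.
\]
First I would check that $\phi$ carries the $p$-th block on the left onto the $p$-th block on the right. A typical element of $\mathcal{R}_{n-p}1^p$ is $1^k0^{n-p-k}1^p$, where $1^k0^{(n-p)-k}\in\mathcal{R}_{n-p}$, and running the three steps in the definition of $\phi$ produces $1^p0\cdot 1^k01^{n-p-k-1}$, whose tail $1^k01^{(n-p)-k-1}$ is precisely $\phi_{n-p}\!\left(1^k0^{(n-p)-k}\right)\in\mathcal{T}_{n-p}$. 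Thus on this block $\phi$ factors as: strip the suffix $1^p$, apply $\phi_{n-p}$, then prepend the fixed prefix $1^p0$.

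Each of these three operations is a bijection onto its image: deleting the trailing $1^p$ identifies $\mathcal{R}_{n-p}1^p$ with $\mathcal{R}_{n-p}$, the middle map is a bijection by \hyperref[lemma:bij]{Lemma \ref*{lemma:bij}}, and prepending the constant prefix $1^p0$ is injective. Hence $\phi$ restricts to a bijection $\mathcal{R}_{n-p}1^p\to 1^p0\,\mathcal{T}_{n-p}$ for each $p$ with $0\leq p\leq n-1$, and it visibly sends the singleton $\{1^n\}$ to $\{1^{n+1}\}$. (As a consistency check, the remark preceding this section already gives $\#\mathcal{P}_n=\#\mathcal{F}_{n+1}=\tfrac{n(n+1)+2}{2}$, so any injection would in fact suffice.)

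It then remains to glue these block maps into a single bijection, and the one point genuinely needing care is that both unions above are \emph{disjoint}, so that the index $p$ is recoverable from each word. On the domain side, every word of $\mathcal{R}_{n-p}1^p$ has the shape $1^k0^m1^p$ with $m=n-p-k\geq 1$; since a $0$ sits immediately before the trailing block, the maximal run of $1$'s at the right end has length exactly $p$, so distinct $p$ yield disjoint blocks, while $1^n$ stands apart as it contains no $0$. On the codomain side, every word of $1^p0\,\mathcal{T}_{n-p}$ contains exactly two $0$'s and opens with a maximal run of exactly $p$ ones, again pinning down $p$, whereas $1^{n+1}$ contains no $0$. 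With disjointness settled on both sides, the block bijections assemble into the desired bijection $\phi:\mathcal{P}_n\to\mathcal{F}_{n+1}$. The main obstacle is thus not the map itself but this run-length bookkeeping that guarantees the indexing by $p$ is unambiguous.
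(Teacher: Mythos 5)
Your proposal is correct and takes essentially the same route as the paper's own proof: both decompose $\mathcal{P}_n$ and $\mathcal{F}_{n+1}$ into the singleton $\{1^n\}$ (respectively $\{1^{n+1}\}$) plus blocks indexed by $p$, and apply \hyperref[lemma:bij]{Lemma \ref*{lemma:bij}} blockwise through the strip-suffix / apply-$\phi_{n-p}$ / prepend-prefix factorization. The only difference is that you explicitly verify the disjointness of the blocks via the run-length argument, a point the paper leaves implicit; this is a worthwhile completion of the same argument rather than a different approach.
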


\begin{proof}
Recall that for $ n\geq 1 $, we have
\begin{equation*}
\mathcal{P}_n:=\left\{1^k0^{n-p-k}1^p\mid\ 0\leq p\leq n-1\ and\ 0\leq k\leq n-p-1\right\}\cup\left\{1^n\right\},
\end{equation*}
and
\begin{equation*}
\mathcal{F}_{n+1}:=\left\{1^p01^k01^{n-p-k-1}\mid 0\leq p\leq n-1\ and \ 0\leq k\leq n-p-1\right\}\cup \left\{1^{n+1}\right\}.
\end{equation*}
Let $ 0\leq \widetilde{p}\leq n-1 $ and consider the map
\begin{align*}
\phi_{n-\widetilde{p}}:\left\{1^k0^{n-\widetilde{p}-k}\ | \ 0\leq k\leq n-\widetilde{p}-1\right\}&\longrightarrow \left\{1^k01^{n-\widetilde{p}-k-1}\mid 0\leq k\leq n-\widetilde{p}-1\right\}\\
\pi_{n-\widetilde{p},k}&\longmapsto\omega_{n-\widetilde{p},k}
\end{align*}
which, according to \hyperref[lemma:bij]{Lemma \ref*{lemma:bij}}, is a bijection.

The map $ \phi $ then can be defined as follows:
\begin{itemize}\label{key}
\item $ \phi(1^n)=1^{n+1} $;
\item $ \phi(\pi)=1^p0\phi_{n-p}(\pi_{n-p,k})$ where $ \pi= \pi_{n-p,k}1^p$.
\end{itemize}
\end{proof}

Now, let $ \omega $ be a word of the set $ \mathcal{F}_{n+1} $, and let $ \pi $ be the inverse image of $ \omega $ by the map $ \phi $. For two nonnegative integers $ n$ and $k $, the construction of the map $ \phi^{-1} $ is straightforward:
\begin{enumerate}
\item delete the first $ p $ $ 1 $-digits as well as the following $ 0 $-digit;
\item change the last $n-p-k-1 $ $ 1 $-digits to $ 0 $;
\item append the word $ 1^p$ at the right-end.
\end{enumerate}

Thus we write
\begin{align*}
\pi=\phi^{-1}(\omega)= \begin{cases}
1^{n}, & \text{if $\omega=1^{n+1}$;}\\
1^{k}0^{n-p-k}1^p, & \text{if $ \omega= 1^p01^{k}01^{n-p-k-1}$.}
\end{cases}
\end{align*}

\begin{example}Let us take $ n=7 $. 
	
We first write the set $\bigcup\limits_{p=0}^{6}\mathcal{R}_{7-p}1^{p} $ in \hyperref[tab:P7]{Table \ref*{tab:P7}} as well as the set $\bigcup\limits_{p=0}^{6}1^{p}0\mathcal{T}_{7-p}$ in \hyperref[tab:F8]{Table \ref*{tab:F8}}. The bijection is then performed entrywise. For instance, we map the entry $ 1100001 $ with $ 10110111 $, here $ p=1 $ and $ k=2 $.
	
\begin{table}[H]
\centering
{
\def\arraystretch{1.25}
$ \begin{array}{c|ccccccc}
k\ \backslash\ p & 0 & 1 & 2 & 3 & 4 & 5 & 6\\ \midrule
0 & 0000000 & 0000001 & 0000011 & 0000111 & 0001111 & 0011111 & 0111111\\
1 & 1000000 & 1000001 & 1000011 & 1000111 & 1001111 & 1011111 & \\
2 & 1100000 & \hyperref[entry:F12]{\textcolor{red}{1100001}}\label{entry:P12} & 1100011 & 1100111 & 1101111 & & \\
3 & 1110000 & 1110001 & 1110011 & 1110111 & & & \\
4 & 1111000 & 1111001 & 1111011 & & & & \\
5 & 1111100 & 1111101 & & & & & \\
6 & 1111110 & & & & & &
\end{array} $
}
\caption{The set $\bigcup\limits_{p=0}^{6}\mathcal{R}_{7-p}1^{p} $.}
\label{tab:P7}
\end{table}
	
\begin{table}[H]
\centering
{
\def\arraystretch{1.25}
$ \begin{array}{c|ccccccc}
k\ \backslash\ p & 0 & 1 & 2 & 3 & 4 & 5 & 6\\ \midrule
0 & 00111111 & 10011111 & 11001111 & 11100111 & 11110011 & 11111001 & 11111100\\
1 & 01011111 & 10101111 & 11010111 & 11101011 & 11110101 & 11111010 & \\
2 & 01101111 & \hyperref[entry:P12]{\textcolor{red}{10110111}}\label{entry:F12} & 11011011 & 11101101 & 11110110 & & \\
3 & 01110111 & 10111011 & 11011101 & 11101110 & & & \\
4 & 01111011 & 10111101 & 11011110 & & & & \\
5 & 01111101 & 10111110 & & & & & \\
6 & 01111110 & & & & & &
\end{array} $
}
\caption{The set $\bigcup\limits_{p=0}^{6}1^{p}0\mathcal{T}_{7-p}$.}
\label{tab:F8}
\end{table}
	
\end{example}

\bigskip
\hrule
\bigskip

\noindent 2010 {\it Mathematics Subject Classification}: Primary 57M25;
Secondary 52C30.

\noindent \emph{Keywords:}
foil knot, state enumeration, line arrangement, lazy caterer's sequence.

\bigskip
\hrule
\end{document}